\newtheorem{thm}{Theorem}
\newtheorem{lem}[thm]{Lemma}
\newtheorem{cor}[thm]{Corollary}
\numberwithin{equation}{section}
\def\N{{\Bbb N}}
\def\Z{{\Bbb Z}}
\def\C{{\Bbb C}}
\def\SL{{\operatorname{SL}}}
\def\PSL{{\operatorname {PSL}}}
\def\tr{{\operatorname{tr}}}
\def\leq{\leqslant}
\def\geq{\geqslant}
\def\le{\leq}
\def\ge{\geq}
\def\d {{{d}}}
\def\1{{\bold 1}}
\renewcommand{\a}{\alpha}
\renewcommand{\b}{\beta}
\newcommand{\e}{\epsilon}
\newcommand{\f}{\varphi}
\renewcommand{\l}{\lambda}
\newcommand{\s}{\sigma}
\newcommand{\go}{{\mathfrak{o}}}
\newcommand{\Ocal}{{\mathcal O}}
\newcommand{\CC}{\mathbb{C}}
\newcommand{\NN}{\mathbb{N}}
\newcommand{\QQ}{\mathbb{Q}}
\newcommand{\ZZ}{\mathbb{Z}}
\title{Optimal estimates for an average of Hurwitz class numbers}
\author{Shingo Sugiyama}
\author{Masao Tsuzuki}
\subjclass[2010]{ Primary 11E41; Secondary 11F72.}
\keywords{the Hurwitz class numbers, resolvent, trace formulas}
\begin{document}

	\begin{abstract}
		In this paper, we give
		an optimal estimate of an average of Hurwitz class numbers.
		As an application, we give an equidistribution result of the family $\{\frac{t}{2q^{\nu/2}} \ | \ \nu \in \NN, t \in \ZZ, |t|<2q^{\nu/2}\}$
		with $q$ prime, weighted by Hurwitz class numbers.
		This equidistribution produces
		many asymptotic relations among Hurwitz class numbers.
		Our proof relies on
		the resolvent trace formula of Hecke operators
		on elliptic cusp forms of weight $k\ge 2$.
	\end{abstract}
	\setcounter{tocdepth}{1}

\maketitle
\section{Introduction} \label{Sec1}

For a positive integer $D>0$ with $D\equiv 0,3 \,(\text{mod } 4)$,
let us define $H(D)$ as 
$$H(D)=\sum_{Q}\frac{1}{\#\Gamma_Q},$$
where $Q$ runs over $\PSL_2(\ZZ)$-equivalence classes of positive 
definite integral binary quadratic forms with discriminant $-D$,
and $\Gamma_Q$ denotes the stabilizer of $Q$ in $\PSL_2(\ZZ)$.
The positive rational number $H(D)$ is commonly called the $D$-th {\it Hurwitz class number} or {\it Kronecker-Hurwitz class number}.
There are some relations among $\{H(D)\}_D$: 
\begin{align}\label{rel of H prime}
\sum_{\substack{t\in \ZZ \\ t^2<4q}}H(4q-t^2)=2q
\end{align}
for a prime number $q$,
or more generally,
Hurwitz's formula
\begin{align}
\sum_{\substack{t\in \Z \\ t^2<4m}} H(4m-t^2)=2\sigma(m)-\sum_{0<d|m}\min(d,m/d) \label{HWformula}
\end{align}
for $m\in \NN$, where $\s(m)$ is the divisor function (\cite{Kronecker}, \cite{Gierster}, \cite{Hurwitz}).
In \cite{Zagier0} and \cite{HirzeburchZagier}, it was shown that the generating function of $\{H(D)\}_D$ is a mock modular form of weight $3/2$ on $\Gamma_0(4)$, and this discovery inspired
many mathematicians to produce new relations of the Hurwitz class numbers
via modular forms.
Relations \eqref{rel of H prime} with condition
``$t\equiv c (\text{mod } a)$'' were given in \cite{BCFJS}
for $a=2, 3, 4$, and conjectured for $a=5, 7$.
Later, Bringmann and Kane \cite{BringmannKane} proved the conjecture for $a=5, 7$
by using mixed mock modular forms.
In recent years, many other relations were revealed by Mertens \cite{Mertens}, \cite{Mertens2}. 

For $m\in \N$, let us consider a discrete measure 
$$\mu_{m} = \sum_{\substack{t\in\ZZ \\ t^2<4m}}H(4m-t^2)\delta_{\frac{t}{2\sqrt{m}}}$$
on the interval $[-1,1]$, where $\delta_a$ is the Dirac measure supported at $a$. Observing that the total mass $\langle\mu_{m},1\rangle$ is exactly given by the formula \eqref{HWformula}, one may naturally raise the following two questions concerning the measure $\mu_{m}$: 
\begin{enumerate}
\item Can one give an exact formula of the $n$-th moment  $\langle \mu_m, x^n \rangle$ for any $n \in \ZZ_{\ge 0}$ ? 
\item What can one say about the limiting behavior of $\mu_m$ as $m\rightarrow \infty$ ? 
\end{enumerate}
In a sence, the first question (1) has been completely answered by the Eichler-Selberg trace formula of Hecke operators on the space of cusp forms on $\SL_2(\Z)$ due to the fact that every monomial $x^{n}$ is a linear combination of Chebyshev polynomials of the 2nd kind. Indeed, \eqref{HWformula} is identical to the trace formula of the $m$-th Hecke operator $T(m)$ on the space of weight $2$ modular forms $S_2(\SL_2(\Z))=\{0\}$. For a prime number $q$, the Eichler-Selberg trace formulas of $T(q^{\nu})$ $(\nu=0,1,\dots)$ on $S_k(\SL_2(\Z))$ are collectively written by using a generating series: 
\begin{thm}\label{SL2RTF} Let $k\geq 4$ be an even integer and $S_k(\SL_2(\Z))$ the space of cusp forms of weight $k$ on $\SL_2(\Z)$. Let $\{f_i\}$ be the normalized Hecke eigen basis of $S_k(\SL_2(\Z))$ and $f_i(\tau)=\sum_{n=1}^{\infty}a_i(n)e^{2\pi i n\tau}$ its Fourier expansion at $i\infty$. Let $q$ be a prime number. Then we have the following identity in the formal power series ring of indeterminate $X$: \begin{align}
\sum_{i} \frac{1}{q^{(1-k)/2}a_i(q)-(X+X^{-1})}
&=-\frac{k-1}{12}\frac{X}{1-q^{-1}X^2}
-\frac{1}{2} \frac{X}{1-X^2}
+\frac{X}{(1-X^2)(1-q^{\frac{1-k}{2}}X)} 
 \label{RTF1}
\\
& \quad +\frac{X}{2}\sum_{\nu=0}^{\infty}\langle \mu_{q^{\nu}},U_{k-2} \rangle \,(q^{-1/2}X)^{\nu},
 \notag
\end{align}
where $U_{l}(x)$ is the Chebyshev polynomial of the 2nd kind of degree $l$.
\end{thm}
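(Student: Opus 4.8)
The plan is to start from the Eichler–Selberg trace formula for the Hecke operator $T(q^\nu)$ acting on $S_k(\SL_2(\Z))$ with $k\ge 4$ even, and repackage the resulting identities into the single generating series asserted in \eqref{RTF1}. Recall that for a normalized Hecke eigenform $f_i$ with $T(q)f_i = a_i(q) f_i$, the eigenvalue of $T(q^\nu)$ is $a_i(q^\nu)$, and writing the Satake parameters of $f_i$ at $q$ as $q^{(k-1)/2}e^{\pm i\theta_i}$ one has $q^{-\nu(k-1)/2}a_i(q^\nu) = U_\nu(\cos\theta_i)$. Thus if we set $\lambda_i := q^{(1-k)/2}a_i(q) = 2\cos\theta_i \in [-2,2]$, the left-hand side of \eqref{RTF1} is $\sum_i \bigl(\lambda_i - (X+X^{-1})\bigr)^{-1}$, and the elementary identity
\begin{align}
\frac{1}{\lambda - (X+X^{-1})} = -X\sum_{\nu=0}^{\infty} U_\nu(\lambda/2)\, X^{\nu}
\notag
\end{align}
(valid in $\Z[\lambda][[X]]$, since $U_\nu(\lambda/2)$ is the generating polynomial of $X^\nu$ in $(1-\lambda X + X^2)^{-1}$) lets us rewrite the whole left-hand side as $-X\sum_{\nu\ge 0}\bigl(\sum_i U_\nu(\lambda_i/2)\bigr)X^{\nu}$. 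So the content of the theorem is exactly an evaluation of the coefficient $\sum_i U_\nu(\lambda_i/2) = q^{-\nu(k-1)/2}\tr\bigl(T(q^\nu)\mid S_k(\SL_2(\Z))\bigr)$ for each $\nu$, summed against $X^\nu$.

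Next I would insert the classical Eichler–Selberg trace formula for $\tr(T(m))$ on $S_k(\SL_2(\Z))$ with $m = q^\nu$. This formula has four kinds of terms: the identity contribution (present only when $m$ is a perfect square, i.e. $\nu$ even), proportional to $\tfrac{k-1}{12}\, m^{k/2-1}$; the elliptic term, which is a sum over $t$ with $t^2 < 4m$ of a Gegenbauer/Chebyshev polynomial in $t/\sqrt m$ times a weighted class number $H(4m-t^2)$ — after normalizing by $q^{-\nu(k-1)/2}$ this is precisely $\langle \mu_{q^\nu}, U_{k-2}\rangle$ up to the standard normalization; the hyperbolic term, a sum over divisors $d\mid m$ with $d \ne \sqrt m$ of $\min(d, m/d)^{k-1}$ (with a sign); and the "dual" or $d = \sqrt m$ boundary term. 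When $m = q^\nu$ the divisors are just $q^0,\dots,q^\nu$, so the hyperbolic sum becomes $\sum_{0\le j<\nu/2} q^{j(k-1)}$ after accounting for $\min(q^j, q^{\nu-j})$, and the $d=\sqrt m$ term appears when $\nu$ is even. The strategy is then purely bookkeeping: multiply each normalized term by $X^\nu$, sum over $\nu \ge 0$, and recognize the three resulting power series as the three explicit rational functions on the right of \eqref{RTF1}.

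Concretely: the identity contributions give $-\tfrac{k-1}{12}\sum_{\nu\text{ even}} q^{\nu(k/2-1)} q^{-\nu(k-1)/2} X^\nu = -\tfrac{k-1}{12}\sum_{\mu\ge0} (q^{-1}X^2)^{\mu}\cdot(\text{power of }X)$, which after the overall factor $-X$ collapses to $-\tfrac{k-1}{12}\,\tfrac{X}{1-q^{-1}X^2}$; the divisor (hyperbolic) terms, normalized, contribute geometric series in $q^{(1-k)/2}X$ and in $X^2$ that sum to $-\tfrac12\tfrac{X}{1-X^2} + \tfrac{X}{(1-X^2)(1-q^{(1-k)/2}X)}$ after carefully splitting $\min(q^j,q^{\nu-j})^{k-1}$ at $j = \nu/2$ and combining the even-$\nu$ boundary term; and the elliptic terms assemble by definition into $\tfrac{X}{2}\sum_{\nu\ge0}\langle\mu_{q^\nu},U_{k-2}\rangle (q^{-1/2}X)^\nu$. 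The one genuine subtlety — and the step I expect to cost the most care — is getting the hyperbolic/boundary bookkeeping exactly right: the Eichler–Selberg formula comes in several normalizations in the literature (the factor in front of $\min(d,m/d)^{k-1}$, the treatment of the $d=\sqrt m$ term, and whether $k=2$ edge corrections leak in), and one must verify that after the $q^{-\nu(k-1)/2}$ twist and summation these precisely reproduce the two rational functions $-\tfrac12\tfrac{X}{1-X^2}$ and $\tfrac{X}{(1-X^2)(1-q^{(1-k)/2}X)}$, with no residual terms. A useful sanity check is to specialize to $k=2$ formally (where $S_2 = \{0\}$): the left side vanishes, and the four rational terms on the right must cancel identically, which pins down all the constants and signs.
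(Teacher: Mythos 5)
Your proposal follows essentially the same route as the paper: the authors also expand the resolvent via the identity $\bigl(\alpha+\alpha^{-1}-(X+X^{-1})\bigr)^{-1}=-\sum_{n\ge0}U_n\bigl(\tfrac{\alpha+\alpha^{-1}}{2}\bigr)X^{n+1}$ (their Lemma \ref{L1}), identify the coefficients with $q^{-\nu(k-1)/2}\tr T(q^\nu)$ via the Hecke recurrence, and then sum the generating series of each term $A_1,\dots,A_4$ of the Eichler--Selberg formula, and your bookkeeping of the identity, elliptic and hyperbolic/boundary contributions (including the splitting of $\min(q^j,q^{\nu-j})$ and the half-weighted $d=\sqrt{m}$ term) does reproduce the three rational functions correctly. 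One caveat: your proposed sanity check at $k=2$ is not valid as stated, because for $k=2$ the Eichler--Selberg formula acquires the extra term $A_4(m)=\sum_{t\mid m}t$, so the four terms appearing in \eqref{RTF1} do not cancel at $k=2$ but instead sum to $\tfrac{X}{(1-q^{1/2}X)(1-q^{-1/2}X)}$ (this is exactly the $\delta_{k,2}$ term visible in the general formula \eqref{RTF}); using that check to ``pin down constants and signs'' would lead you astray.
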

The formula \eqref{RTF1} is a simplest case of the resolvent trace formula \eqref{RTF} of Hecke operators explained in \S~\ref{Rtf}.
In this paper, we prove the following theorem, which answers the question (2) at least when $m$ is a power of a fixed prime number $q$.
\begin{thm}\label{equidist}
	Let $q$ be a prime number. For any $\a,\b \in [-1,1]$ with $\a<\b$, we have
	\begin{align*}
	\lim_{\nu\rightarrow \infty}\frac{{\mu}_{q^\nu}([\a, \b])}{\mu_{q^\nu}([-1,1])} =\lim_{\nu\rightarrow \infty}
	\frac{1-q^{-1}}{2q^\nu}\sum_{\substack {t\in \Z \\\a\leq \frac{t}{2q^{\nu/2}}\leq \b}} H(4q^{\nu}-t^2) = \frac{2}{\pi}\int_{\a}^{\b} \sqrt{1-x^2}\,\d x.
	\end{align*}
\end{thm}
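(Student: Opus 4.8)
The plan is to run the standard moment method for equidistribution, testing the normalized measures against the Chebyshev polynomials $U_{l}$ of the second kind, which are the orthonormal polynomials of the semicircle (Sato--Tate) law: $\tfrac{2}{\pi}\int_{-1}^{1}U_{l}(x)U_{m}(x)\sqrt{1-x^{2}}\,\d x=\delta_{l,m}$, so in particular $\tfrac{2}{\pi}\int_{-1}^{1}U_{l}(x)\sqrt{1-x^{2}}\,\d x=\delta_{l,0}$. Write $\tilde\mu_{q^{\nu}}:=\mu_{q^{\nu}}/\mu_{q^{\nu}}([-1,1])$, a probability measure on the compact interval $[-1,1]$. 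Since $\{U_{l}\}_{l\ge0}$ spans the polynomial ring, it suffices by the Weierstrass approximation theorem to prove $\langle\tilde\mu_{q^{\nu}},U_{l}\rangle\to\delta_{l,0}$ for every $l\ge0$: this gives weak-$*$ convergence of $\tilde\mu_{q^{\nu}}$ to the (atomless) semicircle law, whence $\tilde\mu_{q^{\nu}}([\a,\b])\to\tfrac{2}{\pi}\int_{\a}^{\b}\sqrt{1-x^{2}}\,\d x$ for every $\a<\b$ in $[-1,1]$ by the portmanteau theorem. For odd $l$ both sides vanish: $U_{l}$ is an odd polynomial while $\mu_{q^{\nu}}$ is symmetric about $0$, because $H(4q^{\nu}-t^{2})$ depends only on $t^{2}$. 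For $l=0$ it is trivial. Thus the only substantial case is the even one, $l=k-2$ with $k\ge4$, which is exactly what Theorem~\ref{SL2RTF} controls.

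For the denominator I would invoke Hurwitz's formula \eqref{HWformula} with $m=q^{\nu}$ directly. Since $\sigma(q^{\nu})=\tfrac{q^{\nu+1}-1}{q-1}$ and $\sum_{0<d\mid q^{\nu}}\min(d,q^{\nu}/d)=\sum_{j=0}^{\nu}\min(q^{j},q^{\nu-j})\le(\nu+1)q^{\nu/2}$, one obtains
$$
\mu_{q^{\nu}}([-1,1])=\langle\mu_{q^{\nu}},1\rangle=\frac{2(q^{\nu+1}-1)}{q-1}+O_{q}\!\bigl(\nu q^{\nu/2}\bigr)=\frac{2q^{\nu}}{1-q^{-1}}\bigl(1+o(1)\bigr),
$$
so in particular $\tfrac{1-q^{-1}}{2q^{\nu}}\langle\mu_{q^{\nu}},1\rangle\to1$; this is what identifies the two limits appearing in the statement.

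For the numerator I would expand both sides of \eqref{RTF1} as formal power series in $X$ and compare coefficients. Put $\l_{i}:=q^{(1-k)/2}a_{i}(q)$. Using $(1-2X\cos\theta+X^{2})^{-1}=\sum_{\nu\ge0}U_{\nu}(\cos\theta)X^{\nu}$, the left-hand side of \eqref{RTF1} equals $-\sum_{\nu\ge0}\bigl(\sum_{i}U_{\nu}(\l_{i}/2)\bigr)X^{\nu+1}$; on the right-hand side the $X^{\nu+1}$-coefficients of the three rational terms are bounded uniformly in $\nu$ (a direct check, using $q\ge2$ and $k\ge4$), while the last term contributes $\tfrac12 q^{-\nu/2}\langle\mu_{q^{\nu}},U_{k-2}\rangle$. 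Equating the coefficient of $X^{\nu+1}$ yields
$$
\langle\mu_{q^{\nu}},U_{k-2}\rangle=q^{\nu/2}\Bigl(-2\sum_{i}U_{\nu}(\l_{i}/2)+O_{k,q}(1)\Bigr).
$$
The crucial input now is Deligne's bound $|a_{i}(q)|\le 2q^{(k-1)/2}$: it forces $|\l_{i}|\le2$, say $\l_{i}=2\cos\theta_{i}$, so $|U_{\nu}(\l_{i}/2)|=|\sin((\nu+1)\theta_{i})/\sin\theta_{i}|\le\nu+1$; since $i$ ranges over the finitely many eigenforms $f_{i}$, this gives $\langle\mu_{q^{\nu}},U_{k-2}\rangle=O_{k,q}(\nu q^{\nu/2})$. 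I expect this to be the heart of the matter: it is precisely the Ramanujan--Petersson bound that makes the spectral side of the resolvent trace formula of size $O(\nu q^{\nu/2})$, negligible against the geometric main term $\tfrac{2q^{\nu+1}}{q-1}$ of the denominator; without $|\l_{i}|\le2$ the values $U_{\nu}(\l_{i}/2)$ grow exponentially in $\nu$ and the argument collapses.

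Dividing the two estimates gives $\langle\tilde\mu_{q^{\nu}},U_{k-2}\rangle=O_{k,q}(\nu q^{-\nu/2})\to0=\delta_{k-2,0}$, which together with the odd and $l=0$ cases establishes $\langle\tilde\mu_{q^{\nu}},U_{l}\rangle\to\delta_{l,0}$ for all $l\ge0$. As noted in the first paragraph this yields $\tilde\mu_{q^{\nu}}([\a,\b])\to\tfrac{2}{\pi}\int_{\a}^{\b}\sqrt{1-x^{2}}\,\d x$, the first equality of the theorem; and the middle expression equals $\tfrac{1-q^{-1}}{2q^{\nu}}\langle\mu_{q^{\nu}},1\rangle\cdot\tilde\mu_{q^{\nu}}([\a,\b])$, which has the same limit since the first factor tends to $1$. (The same computation in fact produces a quantitative decay rate for the difference, which is the ``optimal estimate'' of the title.)
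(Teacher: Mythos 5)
Your proof is correct and follows essentially the same route as the paper: the moment method against Chebyshev polynomials (the paper delegates the weak-$*$ convergence step you carry out by hand to Propositions 1 and 2 of \cite{Serre}), the total-mass asymptotic from Hurwitz's formula \eqref{HWformula} (which is the paper's Lemma~\ref{formula for 1} in the case $N=1$, $\chi=\mathbf{1}$), and a bound on $\langle\mu_{q^\nu},U_{k-2}\rangle$ extracted from the resolvent trace formula \eqref{RTF1} together with the Ramanujan--Petersson bound. The only, immaterial, difference is in how that last bound is extracted: the paper observes that $\sum_\nu \langle\mu_{q^{\nu}},U_{k-2}\rangle(q^{-1/2}X)^\nu$ continues holomorphically to $|X|<1$ and applies the Cauchy estimate (Corollary~\ref{esti}, giving $O_\e(q^{\nu(1/2+\e)})$), whereas you compare coefficients of $X^{\nu+1}$ directly and bound $|U_\nu(\l_i/2)|\le \nu+1$ termwise, which gives the marginally sharper $O(\nu q^{\nu/2})$.
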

Note that this theorem exhibits a weighted equidistribution of points $\{\frac{t}{2q^{\nu/2}}\}_{,\nu\in\Z_{\ge 0},\ |t|<2q^{\nu/2}}$ in $[-1,1]$ with weight factor $\{H(4q^\nu-t^2)\}_{\nu, t}$. The proof of this theorem relies on a good estimation of the $n$-th moments $\langle \mu_{q^\nu},x^{n}\rangle$ as $\nu\rightarrow \infty$ for each $n$. For $n\in \ZZ_{\ge 0}$ and $\a>0$, let us consider the following statement asserting a bound of the $n$-th moments of $\mu_{q^{\nu}}$ by $(q^\nu)^{\a+\e}$: 
$$({\rm E}_{n,\a}) : \,(\forall \e>0)\,(\exists C>0)\,(\forall \nu\in \N)\,
\left|\sum_{\substack{t \in \ZZ\\ t^2<4q^{\nu}}} H(4q^\nu-t^2)\left(\frac{t}{2 \sqrt{q^{\nu}}}\right)^n\right| <C q^{\nu(\a+\e)}.
$$
The trivial bound $({\rm E}_{n,1})$ is easily obtained by the class number formula and the upper bound $|L(1,\chi)|\ll \log |D|$ of the Dirichlet $L$-functions
associated with non-principal Dirichlet characters $\chi$ modulo $D$ (cf.\ \eqref{trivial bound}). We have the following optimal improvement of the trivial bound.  
\begin{thm}\label{best possible}
Let $q$ be a prime number. The statement $({\rm E}_{n,1/2})$ holds for all $n\in \N$. If $0<\a<1/2$, there exists $n\in \N$ such that $({\rm E}_{n,\a})$ does not hold. 
\end{thm}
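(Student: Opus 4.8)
My plan is to read all the needed estimates off the resolvent trace formula of Theorem~\ref{SL2RTF}, which holds in every even weight $k\ge 4$ and hence governs all the Chebyshev moments $\langle\mu_{q^{\nu}},U_{k-2}\rangle$ simultaneously. Using the generating identity $\frac{1}{1-\lambda X+X^{2}}=\sum_{j\ge0}U_{j}(\tfrac{\lambda}{2})X^{j}$ with $\lambda=q^{(1-k)/2}a_i(q)$, the $i$-th summand on the left of \eqref{RTF1} becomes $-\sum_{j\ge0}U_{j}\!\bigl(\tfrac12 q^{(1-k)/2}a_i(q)\bigr)X^{j+1}$, while the three rational functions on the right expand as geometric series in $q^{-1}X^{2}$, $X^{2}$ and $q^{(1-k)/2}X$. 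Comparing the coefficient of $X^{\nu+1}$ on both sides then yields, for each even $k\ge 4$, an exact identity
\[
\langle\mu_{q^{\nu}},U_{k-2}\rangle \;=\; q^{\nu/2}\Bigl(-2\sum_{i}U_{\nu}\!\bigl(\tfrac12 q^{(1-k)/2}a_i(q)\bigr)\;+\;R_{k}(\nu)\Bigr),
\]
where $R_{k}(\nu)$ is an explicit bounded quantity (a combination of partial geometric sums; $R_{k}(\nu)=O_{k}(1)$ because $q^{(1-k)/2}<1$ for $k\ge 3$).

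To prove $({\rm E}_{n,1/2})$, I would invoke Deligne's bound $|a_i(q)|\le 2q^{(k-1)/2}$: it places the arguments $\tfrac12 q^{(1-k)/2}a_i(q)$ in $[-1,1]$, so, writing them as $\cos\theta_i$ and using $\bigl|U_{\nu}(\cos\theta)\bigr|=\bigl|\sin((\nu+1)\theta)/\sin\theta\bigr|\le \nu+1$ together with $\dim S_k(\SL_2(\Z))=O(k)$, the spectral sum is $O_{k}(\nu)$, whence $\langle\mu_{q^{\nu}},U_{k-2}\rangle=O_{k,\e}(q^{\nu(1/2+\e)})$ for each fixed even $k\ge 4$. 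Now expand the monomial $x^{n}$ in the Chebyshev basis $U_{0},\dots,U_{n}$: only $U_{j}$ with $j\equiv n\ (\mathrm{mod}\ 2)$ occur; since $\mu_{q^{\nu}}$ is invariant under $t\mapsto -t$ its odd Chebyshev moments vanish, the terms $U_{j}$ with $2\le j\le n$ contribute $O_{n,\e}(q^{\nu(1/2+\e)})$ by the above, and the $U_{0}$-term is the main term, whose exact value is furnished by Hurwitz's formula \eqref{HWformula} with $m=q^{\nu}$. Removing this main term leaves the quantity estimated in $({\rm E}_{n,\a})$, which is accordingly $O_{n,\e}(q^{\nu(1/2+\e)})$, and identically $0$ for odd $n$. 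This establishes $({\rm E}_{n,1/2})$ for all $n\in\N$.

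For optimality it suffices to exhibit one $n$ for which the estimate fails whenever $0<\a<1/2$; I would take $k=4$, i.e.\ $n=2$. Since $S_4(\SL_2(\Z))=\{0\}$ the left side of \eqref{RTF1} vanishes identically, so the identity above becomes purely elementary: $\langle\mu_{q^{\nu}},U_{2}\rangle$ equals $2q^{\nu/2}$ times the coefficient of $X^{\nu+1}$ in $\tfrac14\tfrac{X}{1-q^{-1}X^{2}}+\tfrac12\tfrac{X}{1-X^{2}}-\tfrac{X}{(1-X^{2})(1-q^{-3/2}X)}$. A short computation of that coefficient yields $\langle\mu_{q^{\nu}},U_{2}\rangle=c_{q}\,q^{\nu/2}+O(1)$ with $c_{q}\ne 0$ depending only on $q$ and on the parity of $\nu$ (for even $\nu$, $c_{q}=-\tfrac{1+q^{-3}}{1-q^{-3}}$; for odd $\nu$, $c_{q}=-\tfrac{2q^{-3/2}}{1-q^{-3}}$). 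Hence $|\langle\mu_{q^{\nu}},U_{2}\rangle|\gg_{q}q^{\nu/2}$ along each parity class of $\nu$; since this quantity is, up to the explicit main term and a nonzero scalar, the $n=2$ expression occurring in $({\rm E}_{n,\a})$, the statement $({\rm E}_{2,\a})$ cannot hold for any $\a<1/2$, which is the second assertion.

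The hard part is the input of Deligne's bound: it is precisely what keeps the spectral sum $\sum_{i}U_{\nu}(\cdot)$ of polynomial size $O_{k}(\nu)$ rather than of exponential size $\approx q^{(k-1)\nu/2}$, and it is therefore what pins the optimal exponent at $1/2$ rather than something larger. Everything else is bookkeeping: the power-series manipulations of the first paragraph; the Chebyshev decomposition of the second, where one must be mildly careful about the boundary term $t^{2}=4q^{\nu}$ occurring when $\nu$ is even and about the precise shape of the main term in \eqref{HWformula} (the divisor sum, and $H(0)=-\tfrac1{12}$); and the elementary non-vanishing check for $c_{q}$ in the third. None of these presents a genuine obstacle.
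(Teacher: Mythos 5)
Your proposal is correct in substance, and for the upper bound it follows essentially the paper's path: everything is reduced to the Chebyshev-moment bound $|\langle\mu_{q^\nu},U_{k-2}\rangle|\ll_{k,\e}q^{\nu(1/2+\e)}$ for even $k\ge4$, extracted from \eqref{RTF1} with Deligne's bound as the essential input. The paper packages this as Corollary~\ref{esti} (both sides of \eqref{RTF} are holomorphic on $|X|<1$, then apply the Cauchy estimate), whereas you extract the coefficient of $X^{\nu+1}$ directly and bound the spectral sum via $|U_\nu(\cos\theta)|\le\nu+1$; this yields the slightly sharper $O_k(\nu q^{\nu/2})$ but is the same mechanism. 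For optimality the two arguments genuinely diverge: the paper invokes Serre's theorem to produce \emph{some} large weight $k$ and an eigenform with $\a_i(q)\ne1$, so that $\sum_\nu A_{k,q}(\nu)q^{-\nu/2}X^{\nu}$ has a pole on $|X|=1$ and radius of convergence exactly $1$ --- a soft argument that does not identify which $n$ fails. You instead take $k=4$, where $S_4(\SL_2(\Z))=\{0\}$ kills the spectral side, and compute $\langle\mu_{q^\nu},U_2\rangle=c_qq^{\nu/2}+O(1)$ with explicit nonzero $c_q$ (your values check against the $k=4$ Eichler--Selberg formula, where the elliptic term must cancel $A_3(q^\nu)\sim-\tfrac12q^{3\nu/2}$). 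This is more elementary, needs no input from Serre, and pins the failure to $n=2$; it is a legitimate and arguably cleaner alternative.

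One caveat, which you half-noticed and which is really a defect of the statement rather than of your argument: as literally written, $({\rm E}_{n,\a})$ bounds the full moment $\langle\mu_{q^\nu},x^n\rangle$, and for even $n\ge2$ the $U_0$-component of $x^n$ contributes $\asymp q^\nu$ (e.g.\ $\langle\mu_{q^\nu},x^2\rangle=\tfrac14\langle\mu_{q^\nu},U_2\rangle+\tfrac14\langle\mu_{q^\nu},1\rangle$ with $\langle\mu_{q^\nu},1\rangle\sim 2q^\nu(1-q^{-1})^{-1}$ by \eqref{HWformula}), so the literal $({\rm E}_{n,1/2})$ cannot hold for such $n$. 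Your step ``removing this main term leaves the quantity estimated in $({\rm E}_{n,\a})$'' silently replaces the moment by its deviation from the semicircle prediction; that is surely the intended meaning (it is exactly what the application to Theorem~\ref{equidist} needs, via $\langle\mu_{\nu}^{N,{\bf1},q},U_n\rangle\to0$ for $n>0$), and the paper's own one-line reduction ``Chebyshev polynomials form a basis'' has the identical gap, only stated less visibly. So make the reinterpretation explicit --- prove the bound for $\langle\mu_{q^\nu},U_n\rangle$ with $n\ge1$, or for the centered moments --- and your argument is complete; note that your lower bound $|\langle\mu_{q^\nu},U_2\rangle|\gg_q q^{\nu/2}$ is precisely a lower bound on the centered second moment, so it refutes $({\rm E}_{2,\a})$ for $\a<1/2$ in the same corrected sense in which the positive half is proved.
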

The proof of Theorem~\ref{best possible} in turn relies on the resolvent trace formula \eqref{RTF1}.

%

\section{Resolvent trace formulas} \label{Rtf}


 We shall state the {\rm resolvent trace formula} in a greater generality for Hecke operators acting on the space $S_k(N, \chi)=S_k(\Gamma_0(N),\chi)$ of elliptic cusp forms of weight $k$, level $N$ and of nebentypus $\chi$, where $k$ and $N$ are positive integers and $\chi$ is a Dirichlet character modulo $N$. The conductor of $\chi$ is denoted by $f_\chi$. For $m\in \NN$ relatively prime to $N$, the $m$-th Hecke operator $T(m)$ on $S_k(N, \chi)$
is
defined as
$$
 T(m)f(z) = m^{k-1}\sum_{\substack{a,d \in \NN \\ ad=m}}\sum_{b=0}^{d-1}\frac{\chi(a)}{d^{k}}f\left(\frac{az+b}{d}\right).
$$
The trace formula for $T(m)$ is well-known and can be stated in a couple of different forms. Here we quote the formula from \cite[Theorem 2.2]{Schoof}, which is convenient for our purpose. 

\begin{thm}[Eichler-Selberg trace formula]
	\label{ES trace}
Assume $k\ge 2$ and $\chi(-1)=(-1)^k$.
	Let $m$ be a positive integer relatively prime to $N$. We have 
	$${\tr}(T(m)|S_k(N,\chi))=A_1(m)+A_2(m)+A_3(m)+A_4(m),$$
	where $A_1$ is defined by
	$$A_1(m)=m^{k/2-1}\tilde{\chi}(\sqrt{m})\frac{k-1}{12}\psi(N)$$
	with $\psi(N)=N\prod_{p|N}(1+p^{-1})$, and $\tilde{\chi}(\sqrt{m})=\chi(\sqrt{m})$ if $m$ is a square, and $0$ otherwise.
	The term $A_2(m)$ is defined by
	$$A_2(m)=-\frac{1}{2}\sum_{\substack{t \in \ZZ \\ t^2<4m}}m^{\frac{k-2}{2}}U_{k-2}\left(\frac{t}{2\sqrt{m}}\right)
	H_{N,\chi}(4m-t^2),$$
	where
	$U_n(x)$ is the $n$-th Chebyshev polynomial of 2nd kind. The value $H_{N,\chi}(4m-t^2)$ is defined as
	$$H_{N, \chi}(4m-t^2)=\sum_{\substack{f \in \NN \\ f^2 | t^2-4m \\ \frac{t^2-4m}{f^2}\equiv 0,1 \pmod 4 }} h_w\left(\frac{t^2-4m}{f^2}\right)\mu(t,f,m),$$
	where $h_w(D)=\frac{2h_D}{\#\go_D^\times}$ with $\go_D$ and $h_D$
	being the order in $\QQ(\sqrt{D})$ of conductor $D<0$ and
	its class number,
	$$\mu(t,f,m) =\frac{\psi(N)}{\psi(N/N_f)}\sum_{\substack{x \in (\ZZ/N \ZZ)^\times
			\\ x^2-tx+m \equiv 0 \pmod  {N N_f}}}\chi(x)$$
	with $N_f=\gcd(N,f)>0$.
	The term $A_3$ is defined by
	$$A_3(m)=-\sideset{}{'}\sum_{\substack{d|m \\ 0<d\le \sqrt{m}}}d^{k-1}\sum_{\substack{0<c|N
			\\ \gcd(c,N/c)|\gcd(N/f_\chi, m/d-d)}}\varphi(\gcd(c, N/c))\chi(y).$$
	Here we set $\sideset{}{'}\sum_{d|m, 0<d\le \sqrt{m}}F(d)=\sum_{d|m, 0<d< \sqrt{m}}F(d) + \frac{1}{2}F(\sqrt{m})$
	for a function $F : \NN \rightarrow \CC$ with $F(\sqrt{m})=0$ unless $\sqrt{m}\in\ZZ$,
	an element $y=y_{m,N, c,d} \in \ZZ/(N/\gcd(c,N/c))\ZZ$ is taken so that $y\equiv d \ ({\rm mod}\ c)$ and $y\equiv m/d \ ({\rm mod}\ N/c)$, and
	the symbol $\varphi$ means the Euler totient function.
	The term $A_4$ is defined as
	$$A_4(m) = \delta_{k,2}\delta_{\chi,\bf1}\sum_{0<t|m}t.$$
\end{thm}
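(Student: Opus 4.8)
The plan is to establish the identity by the classical kernel-function method of Eichler and Selberg: realize $\tr(T(m)\mid S_k(N,\chi))$ as the integral of a reproducing kernel along the diagonal of $\Gamma_0(N)\backslash\mathbb{H}$, and then decompose that integral according to the $\Gamma_0(N)$-conjugacy classes of integral matrices of determinant $m$. (An equivalent route would feed a point-pair invariant adapted to the weight-$k$ holomorphic discrete series into the Selberg trace formula, but the geometric side is the same computation.) First I would equip $S_k(N,\chi)$ with the Petersson inner product $\langle f,g\rangle=\int_{\Gamma_0(N)\backslash\mathbb{H}} f(z)\overline{g(z)}\,y^{k}\,\frac{\d x\,\d y}{y^{2}}$ and pass to the kernel of $T(m)$: writing $\{f_i\}$ for an orthonormal basis, one has $\tr(T(m))=\int_{\Gamma_0(N)\backslash\mathbb{H}}\big(\sum_i (T(m)f_i)(z)\,\overline{f_i(z)}\big)\,y^{k}\,\frac{\d x\,\d y}{y^{2}}$. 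The decisive input is the identification of the two-variable kernel $\sum_i (T(m)f_i)(z)\,\overline{f_i(w)}$ with an explicit matrix Poincar\'e series
$$ h_m(z,w)=c_k\sum_{\substack{M\in\Mat_2(\ZZ)\\ \det M=m}}\frac{\bar\chi(M)\,(\det M)^{k-1}}{j_M(z,w)^{k}},\qquad j_M(z,w)=Cz\bar w+Dz-A\bar w-B,\ \ M=\begin{pmatrix}A&B\\ C&D\end{pmatrix}, $$
with a normalizing constant $c_k$ depending only on $k$; this follows from the reproducing property of the Bergman kernel together with the definition of $T(m)$ as a sum over the cosets in the relevant Hecke double coset. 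Substituting $w=z$ reduces the problem to the evaluation of $\int_{\Gamma_0(N)\backslash\mathbb{H}} h_m(z,z)\,y^{k}\,\frac{\d x\,\d y}{y^2}$.

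Next I would unfold this diagonal integral. Grouping the matrices $M$ by $\Gamma_0(N)$-conjugacy turns it into a sum of orbital integrals over the centralizers $\Gamma_M$, organized by the sign of the discriminant $t^{2}-4m$ with $t=\tr M$. The central (scalar) class $M=\sqrt m\,I$ occurs only when $m$ is a square and contributes the hyperbolic volume of $\Gamma_0(N)\backslash\mathbb{H}$ times the archimedean weight, producing exactly $A_1(m)=m^{k/2-1}\tilde\chi(\sqrt m)\frac{k-1}{12}\psi(N)$. The elliptic classes ($t^2<4m$) have finite cyclic centralizers and absolutely convergent orbital integrals; the archimedean factor is a beta/Gamma integral that evaluates to $m^{(k-2)/2}U_{k-2}(t/2\sqrt m)$, while the arithmetic factor counts the elliptic classes of fixed trace $t$ and determinant $m$, twisted by $\chi$. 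The split and parabolic classes ($t^2\ge 4m$, non-scalar) concentrate near the cusps and assemble into $A_3(m)$.

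The elliptic term is the heart of the matter, and I expect it to be the main obstacle, in two independent respects. On the archimedean side one must evaluate the orbital integral in closed form and recognize it as the Chebyshev polynomial $U_{k-2}$. On the arithmetic side one must carry out the delicate bookkeeping that converts the count of $\Gamma_0(N)$-conjugacy classes of a given trace into the weighted class number: via Gauss's theory of binary quadratic forms and orders $\go_D$ in $\QQ(\sqrt{t^2-4m})$, the global count factors as a sum over conductors $f$ with $f^2\mid t^2-4m$ and $(t^2-4m)/f^2\equiv 0,1\pmod 4$, giving the factors $h_w((t^2-4m)/f^2)$, while the level-$N$ and nebentypus data enter through the local count of $x\in(\ZZ/NN_f\ZZ)^\times$ with $x^2-tx+m\equiv 0\pmod{NN_f}$ weighted by $\chi(x)$ --- precisely the factor $\mu(t,f,m)$. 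Assembling the archimedean and arithmetic factors yields the elliptic contribution $A_2(m)$.

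The split/parabolic contribution $A_3$ is delicate for a different reason: the individual orbital integrals diverge at the cusps, so I would truncate the fundamental domain, compute the boundary terms, and pass to the limit; this regularization is what produces the constraint $0<d\le\sqrt m$, $d\mid m$, together with the cusp-local factor $\varphi(\gcd(c,N/c))\,\chi(y)$ indexed by the divisors $c\mid N$ that label the cusps. Finally, when $k=2$ the defining series for $h_m$ is only conditionally convergent; I would insert a convergence factor of Hecke type, compute for $\Re s\gg 0$, and continue analytically to $s=0$, the surviving non-holomorphic Eisenstein contribution leaving the residual term $A_4(m)=\delta_{k,2}\delta_{\chi,\mathbf 1}\sum_{0<t\mid m}t$. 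Adding $A_1(m)+A_2(m)+A_3(m)+A_4(m)$ gives the asserted identity.
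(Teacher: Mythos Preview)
Your sketch is a faithful outline of the classical kernel-function proof of the Eichler--Selberg trace formula, and the identification of the four terms with the scalar, elliptic, hyperbolic/parabolic, and residual contributions is correct in spirit. However, there is nothing to compare against: the paper does not prove this theorem. It is stated as a quotation from the literature --- the sentence immediately preceding the theorem reads ``Here we quote the formula from \cite[Theorem~2.2]{Schoof}, which is convenient for our purpose'' --- and is used as a black box throughout the rest of the argument. So your proposal is not an alternative to the paper's proof; it is a proof where the paper offers none.

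If the intent was to supply what the paper omits, then your plan is reasonable but remains a plan. The genuinely hard steps are precisely the ones you flag: the exact evaluation of the elliptic orbital integral as $m^{(k-2)/2}U_{k-2}(t/2\sqrt{m})$, the arithmetic bookkeeping that produces $\mu(t,f,m)$ with the specific normalization $\psi(N)/\psi(N/N_f)$ and the congruence modulo $NN_f$, the cusp regularization that yields the condition $\gcd(c,N/c)\mid\gcd(N/f_\chi,\,m/d-d)$ together with the CRT element $y$, and the Hecke-type analytic continuation at $k=2$. None of these are carried out, and each is where the constants and side conditions in the statement actually get pinned down. As written, the proposal would not let a reader verify the formula in the precise form stated; it is a correct roadmap, not a proof.
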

Let us explain the resolvent trace formula of Hecke operators. For simplicity, we restrict ourselves to the simplest case involving only one prime $q$ relatively prime to $N$. We fix a square root $\chi(q)^{1/2}\in \CC^\times$ and set $\chi(q)^{-1/2}=(\chi(q)^{1/2})^{-1}$ and $\chi(q^n)^{1/2}=\chi(q)^{n/2}$; although there are two choices of square roots, it does not influence the argument throughout this article. Set $T'(q)=\chi(q)^{-1/2}q^{(1-k)/2}T(q)$ and consider
$$\tr((T'(q)-\lambda)^{-1}|S_k(N,\chi)),
$$ where $\l \in \CC$ is not equal to any eigenvalues of $T'(q)$ on $S_k(N,\chi)$. Let $\{ f_i\}$ be the orthogonal basis consisting of normalized Hecke eigenforms of $S_{k}(N,\chi)$ with respect to the Petersson inner product.
Let $f_i(\tau)=\sum_{n=1}^\infty a_i(n)e^{2\pi i n\tau}$ be the Fourier expansion of $f_i$ at $i \infty$.
We remark that $a_{i}(q)=\chi(q)\overline{a_{i}(q)}$. The parameter $\{\a_i(q), \a_i(q)^{-1}\}$ is defined by $\chi(q)^{-1/2}q^{(1-k)/2}a_{i}(q) = \a_i(q)+\a_i(q)^{-1}$. Then, the Satake parameter of $f_i$ at $q$ is written as $\{\chi(q)^{1/2}\a_i(q), \chi(q)^{1/2}\a_i(q)^{-1}\}$. By the Ramanujan-Petersson conjecture (Deligne's theorem \cite[${\rm N}^\circ$ 5]{Deligne1}, \cite{Deligne2}), we have $|\a_i(q)|=1$, or equivalently, $\chi(q)^{-1/2}q^{(1-k)/2}a_{i}(q) \in [-2,2]$.
Furthermore, we have $|a_{i}(n)| \le d(n)n^{(k-1)/2}$ for any $n \in \NN$ relatively prime to $N$, where $d(n)$ is the number of the positive divisors of $n$. Since $S_{k}(N,\chi)$ is a finite dimensional $\C$-vector space, the operator $T'(q)-\lambda$ for $\lambda\in \C$ with large enough absolute value is invertible.

For $c\in \NN$ such that $c|N$,
set $l=\gcd(c,N/c)$.
Then, $c_1:=\gcd(c,f_\chi)$ and $c_2:=c/c_1$
satisfy
$c_1|f_\chi$, $c_1|c$, $\gcd(c,f_\chi/c_1)=1$ and
$\gcd(c_2, f_\chi)=1$.
By $\gcd(c_1, f_\chi/c_1)=1$,
we have the decomposition $\chi=\chi_c\times \chi_c'$
of Dirichlet characters so that
$f_{\chi_c}=c_1$ and $f_{\chi_c'}=f_\chi/c_1$.

For $q,l \in \NN$, let $m_{q,l}$ denote the order of $q$ in $(\ZZ/l\ZZ)^\times$.

The resolvent trace formula of $T(q)$ is described by $\chi_c'$ and $m_{q,l}$ as follows.
\begin{thm}[The resolvent trace formula of $T(q)$] \label{resolventTF}
Let $q$ be a prime number not dividing $N$.
If $|X|$ is sufficiently small, then $\tr (\{T'(q) - (X+X^{-1})\}^{-1}|S_k(N,\chi))$
is equal to
	\begin{align}\label{RTF}
	&  \sum_{i}\frac{1}{\a_i(q)+\a_i(q)^{-1}-(X+X^{-1})}\\
	= \,& -\frac{k-1}{12}\psi(N)\frac{X}{1-q^{-1}X^2}
	-\frac{1}{2}\frac{X}{1-X^2}\sum_{\substack{0<c|N \\ \gcd(c,N/c)|(N/f_\chi)}}
	\varphi(\gcd(c,N/c)) \notag\\
	&+\sum_{\substack{0<l|(N/f_\chi) }}\varphi(l)\sum_{\substack{0<c | N \\ l=\gcd(c,N/c)}}
	\frac{X}{(1-X^2)(1-\{\chi_c'(q)\chi(q)^{-1/2}q^{(1-k)/2}X\}^{m_{q,l}})} \notag \\
	\notag \\
	& -\delta_{k,2}\delta_{\chi,{\bf1}}\frac{X}{(1-q^{1/2}X)(1-q^{-1/2}X)}
	+\frac{X}{2}\sum_{\nu=0}^{\infty}A_{k,q}(\nu)(\chi(q)^{-1/2}q^{-1/2}X)^{\nu},\notag
	\end{align}
		where we set
	$$A_{k,q}(\nu)=\sum_{\substack{t \in \ZZ \\ t^2<4q^\nu}}H_{N,\chi}(4q^\nu-t^2)U_{k-2}\left(\frac{t}{2q^{\nu/2}}\right).$$
\end{thm}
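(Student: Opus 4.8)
The plan is to expand the resolvent on the left of \eqref{RTF} into a power series in $X$ whose $\nu$-th coefficient is essentially $\tr(T(q^\nu)|S_k(N,\chi))$, to insert the Eichler--Selberg formula (Theorem~\ref{ES trace}) at $m=q^\nu$, and to sum the four resulting generating series in closed form.

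For the first step, note that since $S_k(N,\chi)$ is finite dimensional and $\{f_i\}$ is a Hecke eigenbasis on which $T'(q)$ has eigenvalue $\a_i(q)+\a_i(q)^{-1}$, the resolvent trace equals $\sum_i(\a_i(q)+\a_i(q)^{-1}-(X+X^{-1}))^{-1}$, a rational function of $X$ holomorphic at and vanishing at $X=0$. Using $\a+\a^{-1}-X-X^{-1}=-X^{-1}(1-\a X)(1-\a^{-1}X)$ together with the Chebyshev generating identity $\sum_{\nu\ge 0}U_\nu(\frac{\a+\a^{-1}}{2})X^\nu=\{(1-\a X)(1-\a^{-1}X)\}^{-1}$ (valid for $|X|$ small; here $|\a_i(q)|=1$, though any a priori bound on the eigenvalues would do), each summand becomes $-X\sum_{\nu\ge 0}U_\nu(\frac{\a_i(q)+\a_i(q)^{-1}}{2})X^\nu$, and the double sum over $i$ and $\nu$ may be interchanged. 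By the Hecke recursion $a_i(q^{\nu+1})=a_i(q)a_i(q^\nu)-\chi(q)q^{k-1}a_i(q^{\nu-1})$ the eigenvalue of $T(q^\nu)$ on $f_i$ is $\chi(q)^{\nu/2}q^{\nu(k-1)/2}U_\nu(\frac{\a_i(q)+\a_i(q)^{-1}}{2})$, so summing over $i$ gives
\[ \sum_i U_\nu\!\left(\frac{\a_i(q)+\a_i(q)^{-1}}{2}\right)=\chi(q)^{-\nu/2}q^{-\nu(k-1)/2}\,\tr(T(q^\nu)|S_k(N,\chi)), \]
whence the left side of \eqref{RTF} equals $-X\sum_{\nu\ge 0}(\chi(q)^{-1/2}q^{(1-k)/2}X)^{\nu}\,\tr(T(q^\nu)|S_k(N,\chi))$.

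The next step is to substitute $\tr(T(q^\nu))=A_1(q^\nu)+A_2(q^\nu)+A_3(q^\nu)+A_4(q^\nu)$ and evaluate the four pieces. The $A_1$-series vanishes for odd $\nu$ and is a geometric series in $q^{-1}X^{2}$ for even $\nu$, giving the first term of \eqref{RTF}. The $A_2$-series is immediate from the definition of $A_{k,q}(\nu)$: since $A_2(q^\nu)=-\frac12 q^{\nu(k-2)/2}A_{k,q}(\nu)$, the powers of $q$ cancel against $q^{\nu(1-k)/2}$ and leave the last term $\frac{X}{2}\sum_{\nu\ge 0}A_{k,q}(\nu)(\chi(q)^{-1/2}q^{-1/2}X)^{\nu}$. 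The $A_4$-series, nonzero only for $k=2$, $\chi={\bf1}$ (where $A_4(q^\nu)=\sum_{0\le j\le\nu}q^{j}$), becomes a double geometric series once one sums over $j$ first, and contributes $-\delta_{k,2}\delta_{\chi,{\bf1}}X\{(1-q^{1/2}X)(1-q^{-1/2}X)\}^{-1}$.

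The $A_3$-series is the substantive part. Writing $d=q^{j}$ with $0\le j\le\lfloor\nu/2\rfloor$, one has $m/d-d=q^{j}(q^{\nu-2j}-1)$, so $\gcd(q,N)=1$ turns the condition $\gcd(c,N/c)\mid\gcd(N/f_\chi,m/d-d)$ into the two requirements $l\mid N/f_\chi$ and $m_{q,l}\mid(\nu-2j)$, where $l=\gcd(c,N/c)$; and the Chinese-remainder value $y=y_{q^\nu,N,c,d}$ satisfies $\chi(y)=\chi_c(q)^{j}\chi_c'(q)^{\nu-j}=\chi(q)^{j}\chi_c'(q)^{\nu-2j}$ by the factorization $\chi=\chi_c\chi_c'$ and the coprimality relations recorded just before the theorem. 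Re-indexing the double sum over $(\nu,j)$ by the pair $(j,r)$ with $r=\nu-2j\ge 0$ — the half-weighted diagonal divisor $d=\sqrt{m}$ being exactly the $r=0$ term — the powers of $q$ and of $\chi(q)$ collapse the $j$-sum to $\sum_{j\ge 0}X^{2j}=(1-X^{2})^{-1}$, while the $r$-sum, restricted to multiples of $m_{q,l}$ and carrying the half-weight at $r=0$, equals $(1-u_c)^{-1}-\frac12$ with $u_c=\{\chi_c'(q)\chi(q)^{-1/2}q^{(1-k)/2}X\}^{m_{q,l}}$. The constant $-\frac12$, summed with weight $\varphi(l)$ over all admissible $c$, yields the second term of \eqref{RTF}; the part $(1-u_c)^{-1}$, regrouped as $\sum_{0<l\mid(N/f_\chi)}\varphi(l)\sum_{c:\,\gcd(c,N/c)=l}$, yields the third. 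Collecting the four contributions proves \eqref{RTF}; all rearrangements take place on the disc $|X|<q^{-1/2}$, where every series converges absolutely (for the $A_{k,q}$-series one uses $H_{N,\chi}(4q^\nu-t^2)\ll_\e q^{\nu(1/2+\e)}$ and the boundedness of $U_{k-2}$ on $[-1,1]$). The only real obstacle is the bookkeeping in the $A_3$-term — translating the nested $\gcd$ conditions into the order $m_{q,l}$, tracking the half-weighted divisor $d=\sqrt m$ through the re-indexing, and matching $\chi(y)$ with $\chi=\chi_c\chi_c'$; the $A_1$-, $A_2$-, $A_4$-contributions are routine geometric-series evaluations.
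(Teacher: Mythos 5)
Your proposal is correct, and its overall architecture coincides with the paper's: expand the resolvent via the Chebyshev generating identity (the paper's Lemma~\ref{L1}), convert $U_\nu$ of the eigenvalue into $\tr(T(q^\nu))$ by the Hecke recursion, substitute the Eichler--Selberg formula at $m=q^\nu$, and sum the four series $A_1,\dots,A_4$; your evaluations of the $A_1$-, $A_2$-, $A_4$-series and your convergence justification (the trivial bound on $H_{N,\chi}$ giving absolute convergence of the $A_{k,q}$-series for $|X|$ small) match the paper's. Where you genuinely diverge is the $A_3$-series, which the paper isolates as Lemma~\ref{explicit of generating A3} and proves by splitting into odd and even exponents $\nu$ and then further into the cases $m_{q,l}$ odd or even, each case requiring a telescoping computation over blocks $m_{q,l}t+n_{q,l}\le\nu<m_{q,l}(t+1)+n_{q,l}$. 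Your re-indexing of the divisor sum by the pair $(j,r)$ with $d=q^j$ and $r=\nu-2j\ge 0$ decouples the two summations at once: the $j$-sum collapses to $(1-X^2)^{-1}$ after the cancellation $q^{j(k-1)}\chi(q)^j\zeta^{2j}=X^{2j}$, the $r$-sum restricted to $m_{q,l}\mid r$ with half-weight at $r=0$ (the diagonal divisor $d=\sqrt m$) gives $(1-u_c)^{-1}-\tfrac12$, and the identity $\chi(y)=\chi(q)^j\chi_c'(q)^{\nu-2j}$ makes the dependence on $r$ alone transparent. This avoids all parity case distinctions and makes it visible at a glance why the second and third terms of \eqref{RTF} share the factor $(1-X^2)^{-1}$; the paper's version, by contrast, keeps the formulas for odd and even $m_{q,l}$ separate until the very end. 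Both arguments are valid on the same disc of convergence, and your bookkeeping of the gcd condition (reducing $\gcd(c,N/c)\mid\gcd(N/f_\chi,q^j(q^r-1))$ to $l\mid N/f_\chi$ and $m_{q,l}\mid r$ using $\gcd(q,N)=1$) agrees with the paper's.
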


\section{Proof of Theorem \ref{resolventTF}}
Theorem~\ref{resolventTF} is deduced from the trace formulas of $T(q^{\nu})$ with $\nu\in \N$ recalled in Theorem~\ref{ES trace} by considering the generating series
\begin{align}
\sum_{\nu=0}^{\infty}\tr(T(q^\nu)|S_k(N,\chi))\zeta^{\nu+1}
 \label{GENseries}
\end{align}
with $X=\chi(q)^{1/2}q^{(k-1)/2}\zeta$. Although the proof is completely elementary, we include it for convenience of the readers. 

Invoking $U_n(\cos \theta)=\sin((n+1)\theta)/\sin\theta$,
a direct computation gives us the following.
\begin{lem} \label{L1}
	Let $\a\in \C$. Then, for any $X \in \CC$ such that $|X|<\min(|\a|,|\a|^{-1})$,
	\begin{align*}
	\frac{1}{\a+\a^{-1}-(X+X^{-1})}=-\sum_{n=0}^{\infty} U_{n}\left(\frac{\a+\a^{-1}}{2}\right)\,X^{n+1}.
	\end{align*}
\end{lem}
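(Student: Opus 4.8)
The plan is to reduce the identity to the summation of two geometric series. Set $\b=\tfrac{\a+\a^{-1}}{2}$. The first step is to record the closed formula
$$U_n(\b)=\frac{\a^{n+1}-\a^{-n-1}}{\a-\a^{-1}}\qquad(\a\ne\pm1),$$
which is exactly $U_n(\cos\theta)=\sin((n+1)\theta)/\sin\theta$ rewritten with $\a=e^{i\theta}$ (allowing complex $\theta$); it is proved in one line by induction from $U_0=1$, $U_1(x)=2x$ and the three-term recurrence $U_{n+1}=2\b U_n-U_{n-1}$. Since $|X|<\min(|\a|,|\a|^{-1})$ forces $|\a X|<1$ and $|\a^{-1}X|<1$, the series $\sum_{n\ge0}(\a X)^{n+1}$ and $\sum_{n\ge0}(\a^{-1}X)^{n+1}$ both converge absolutely, so the term-by-term manipulations below are legitimate.

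Next I would substitute this formula into the right-hand side, split the sum (valid by absolute convergence), and collapse the two geometric series:
$$-\sum_{n=0}^{\infty}U_n(\b)\,X^{n+1}=-\frac{1}{\a-\a^{-1}}\left(\frac{\a X}{1-\a X}-\frac{\a^{-1}X}{1-\a^{-1}X}\right)=-\frac{X}{(1-\a X)(1-\a^{-1}X)},$$
where the last equality is the elementary identity $\frac{\a X}{1-\a X}-\frac{\a^{-1}X}{1-\a^{-1}X}=\frac{(\a-\a^{-1})X}{(1-\a X)(1-\a^{-1}X)}$. To finish, I expand the denominator and factor out $X$:
$$(1-\a X)(1-\a^{-1}X)=1-(\a+\a^{-1})X+X^2=-X\bigl(\a+\a^{-1}-(X+X^{-1})\bigr),$$
whence $-\dfrac{X}{(1-\a X)(1-\a^{-1}X)}=\dfrac{1}{\a+\a^{-1}-(X+X^{-1})}$, which is the assertion. (Equivalently, one could quote the generating function $\sum_{n\ge0}U_n(t)Y^n=(1-2tY+Y^2)^{-1}$ with $t=\b$, $Y=X$, and then do the same final algebra.)

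Finally, the excluded values $\a=\pm1$, where the closed form for $U_n$ degenerates, must be treated by hand: there $U_n(\pm1)=(\pm1)^n(n+1)$, and one sums $-\sum_{n\ge0}(\pm1)^n(n+1)X^{n+1}$ using $\sum_{n\ge0}(n+1)y^n=(1-y)^{-2}$, obtaining $-X(1\mp X)^{-2}$, which is precisely $\bigl(\pm2-(X+X^{-1})\bigr)^{-1}$; alternatively, both sides are rational functions of $(\a,X)$ agreeing on a dense set, so the general identity follows by continuity. I do not anticipate a genuine obstacle here: the entire argument is a short geometric-series computation. If anything counts as "the hard part," it is merely bookkeeping — tracking the two series, checking the absolute-convergence hypothesis in the regimes $|\a|\ge1$ and $|\a|<1$, and not forgetting the degenerate locus $\a=\pm1$.
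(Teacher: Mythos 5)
Your proof is correct and follows the same route the paper indicates: the paper simply says that the identity follows from $U_n(\cos\theta)=\sin((n+1)\theta)/\sin\theta$ by a direct computation, and your argument is exactly that computation carried out (writing $U_n$ via $\a^{n+1}-\a^{-n-1}$, summing two geometric series, and covering the degenerate case $\a=\pm1$). No discrepancies to report.
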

To prove Theorem \ref{resolventTF},
	it is sufficient to the case $X=\chi(q)^{1/2}q^{(k-1)/2}\zeta$ with $|\zeta|$ sufficiently small.
	By noting $U_\nu(2^{-1}(\a_i(q)+\a_i(q)^{-1})) = \chi(q^{\nu})^{-1/2}q^{(1-k)\nu/2}a_{f_i}(q^\nu)$ ($\nu \in \ZZ_{\ge0}$) deduced from the recurrence equation of Hecke operators, we have that \eqref{GENseries} equals
	$$-\chi(q)^{-1/2}q^{\frac{1-k}{2}}\sum_i\frac{1}{\a_i(q)+\a_i(q)^{-1}-(X+X^{-1})}.$$
	A direct computation also reveals $$\sum_{\nu=0}^{\infty}A_1(q^\nu)\zeta^{\nu+1}=\frac{k-1}{12}\psi(N)\frac{\chi(q)^{-1/2}q^{(1-k)/2}X}{1-q^{-1}X^2}.$$
	and
	$$\sum_{\nu=0}^{\infty}A_2(q^\nu)\zeta^{\nu+1}=-\frac{1}{2}\sum_{\nu=0}^{\infty}A_{k,q}(\nu) (\chi(q)^{-1/2}q^{-1/2}X)^{\nu}\times\chi(q)^{-1/2}q^{(1-k)/2}X.$$
	The series as above is absolutely and locally uniformly convergent on $|X|\ll q^{-1-\e}$ for a small $\e>0$.
	Indeed, by invoking the relation
	$$h_w(D)=\frac{h_{D_0}}{\#\go_{D_0}^\times}f\prod_{p|f}(1-p^{-1}\left(\tfrac{D_0}{p}\right)), \qquad (D=D_0 f^2),$$
where $D_0<0$ is the fundamental discriminant, the class number formula
and the estimate $L(1, (\frac{D_0}{p}))\ll \log |D_0|$,
we obtain
	the inequality
	\begin{align}
	\label{trivial bound}
|A_{k,q}(\nu)|\ll_{k,N} \sum_{\substack{t\in\ZZ \\ t^2<4q^{\nu}}}H_{N,\bf1}(4q^\nu-t^2)
\ll_\e \sum_{\substack{t \in \ZZ \\
	t^2<4q^\nu}}\sum_{\substack{f \in \NN \\f^2 | (t^2-4q^\nu)}}|\tfrac{t^2-4q^\nu}{f^2}|^{1/2+\e}\ll_{\e}
(4q^\nu)^{1+2\e}.
\end{align}

	The series involving $A_4$ is computed as
	\begin{align*}
	\sum_{\nu=0}^{\infty} A_4(q^{\nu})\zeta^{\nu+1} = &\delta_{k,2} \delta_{\chi,\bf1}\sum_{\nu=0}^{\infty}
	\sum_{\substack{0<t|q^{\nu}}}t\zeta^{\nu+1}
	= \delta_{k,2}\delta_{\chi,\bf1}\sum_{\nu=0}^{\infty}
	\sum_{j=0}^{\nu}q^j\zeta^{\nu+1}\\
	= & \delta_{k,2}\delta_{\chi,\bf1}\frac{1}{q-1}\left(\frac{q\zeta}{1-q\zeta}-\frac{\zeta}{1-\zeta}\right)
	= \delta_{k,2}\delta_{\chi,\bf1}\frac{\zeta}{(1-q\zeta)(1-\zeta)} \\
	= & \delta_{k,2}\delta_{\chi,\bf1}
	\frac{q^{-1/2}X}{(1-q^{1/2}X)(1-q^{-1/2}X)}.
	\end{align*}
To complete the proof of Theorem \ref{resolventTF},
we give an explicit formula of
$$F(\zeta)=\sum_{\nu=0}^{\infty}A_3(q^{\nu})\zeta^\nu.$$

\begin{lem} \label{explicit of generating A3} Set $X=\chi(q)^{1/2}q^{(k-1)/2}\zeta$. If $|X|<1$,
	then $F(\zeta)$ is absolutely and locally uniformly convergent, and we have
\begin{align*}
F(\zeta)
	=& -\sum_{\substack{0<l|(N/f_\chi) }}\varphi(l)\sum_{\substack{0<c | N \\ l=\gcd(c,N/c)}}
\frac{1}{(1-X^2)(1-\{\chi_c'(q)\chi(q)^{-1/2}q^{(1-k)/2}X\}^{m_{q,l}})} \\
&+\frac{1}{2}\frac{1}{1-X^2}\sum_{\substack{0<c|N \\ \gcd(c,N/c)|(N/f_\chi)}}
\varphi(\gcd(c,N/c)).
\end{align*}
\end{lem}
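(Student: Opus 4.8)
The goal is to evaluate the generating series $F(\zeta)=\sum_{\nu=0}^{\infty}A_3(q^{\nu})\zeta^{\nu}$ in closed form. The strategy is to interchange the order of summation: first fix the outer data in the definition of $A_3(q^{\nu})$ — namely a divisor $0<c|N$ together with a divisor $d$ of $q^{\nu}$ with $0<d\le\sqrt{q^{\nu}}$ — and then sum over $\nu$. Since $q$ is prime, the divisors $d$ of $q^{\nu}$ are exactly $d=q^{j}$ with $0\le j\le \nu$, and the condition $0<d\le\sqrt{q^{\nu}}$ becomes $2j\le\nu$, with the primed-sum convention contributing the factor $\tfrac12$ precisely when $\nu=2j$. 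So I would rewrite
\begin{align*}
F(\zeta)=-\sum_{\substack{0<c|N}}\varphi(\gcd(c,N/c))\sum_{j=0}^{\infty}q^{j(k-1)}\chi(y_{q^{\nu},N,c,q^{j}})\,\zeta^{\nu}\quad\text{(summed appropriately over }\nu\ge 2j\text{)},
\end{align*}
being careful that the constraint $\gcd(c,N/c)\mid\gcd(N/f_\chi,\,m/d-d)$ in the definition of $A_3$ restricts which terms survive, and that the element $y=y_{m,N,c,d}$ depends on $m=q^{\nu}$ and $d=q^{j}$ through the CRT conditions $y\equiv d\ (\mathrm{mod}\ c)$, $y\equiv m/d\ (\mathrm{mod}\ N/c)$.

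The key observation is that for fixed $c$ (hence fixed $l=\gcd(c,N/c)$) and fixed $j$, the quantity $\chi(y_{q^{\nu},N,c,q^{j}})$ depends on $\nu$ only through $q^{\nu-j}=q^{\nu}/q^{j}\pmod{N/c}$, and in fact only through $q^{\nu-2j}\pmod{l}$ after one uses the factorization $\chi=\chi_c\times\chi_c'$ with $f_{\chi_c}=c_1=\gcd(c,f_\chi)$ and $f_{\chi_c'}=f_\chi/c_1$: the $\chi_c$-part sees only $y\bmod c_1$, which by $y\equiv d\pmod c$ is just $d=q^{j}$, while the $\chi_c'$-part, after accounting for $y\bmod(N/c)$ and the compatibility forced by the surviving congruence condition, evaluates to $\chi_c'(q)^{\nu-2j}$ times a constant depending only on $c,j$. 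Here is where the divisibility hypothesis $l\mid (N/f_\chi)$ enters: it guarantees that the congruence condition on $x$ (equivalently the condition $l\mid m/d-d$) is satisfied for infinitely many $\nu$ in an arithmetic progression governed by $m_{q,l}$, the multiplicative order of $q$ modulo $l$. Thus, for each residue class of $\nu$ mod $m_{q,l}$ the inner sum over $\nu$ becomes a geometric series in $\zeta$, or equivalently in $X=\chi(q)^{1/2}q^{(k-1)/2}\zeta$ after substituting $q^{j(k-1)}\zeta^{\nu}$ and collecting.

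Carrying this out, I expect the sum over $j$ to produce a factor $\sum_{j\ge 0}(q^{k-1}\zeta^{2})^{j}\cdot(\text{stuff})=(1-X^{2})^{-1}\cdot(\text{stuff})$, since $q^{k-1}\zeta^{2}=\chi(q)^{-1}X^{2}$ up to the root-of-unity bookkeeping that the paper has already declared harmless; and the sum over the progression of $\nu$ to produce a factor $(1-\{\chi_c'(q)\chi(q)^{-1/2}q^{(1-k)/2}X\}^{m_{q,l}})^{-1}$, which is exactly the shape in the claimed formula. The extra additive term $+\tfrac12(1-X^{2})^{-1}\sum_{c}\varphi(\gcd(c,N/c))$ comes from correctly implementing the $\tfrac12 F(\sqrt{m})$ convention in the primed sum — i.e., the diagonal terms $\nu=2j$ get counted with weight $\tfrac12$ rather than $1$, and isolating that correction and resumming gives the stated constant. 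Finally, absolute and locally uniform convergence for $|X|<1$ follows from the geometric-series bounds once the rearrangement is justified, which in turn is legitimate because the same bounds show absolute convergence of the double (really quadruple) series.

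**Main obstacle.** The delicate point is the precise evaluation of $\chi(y_{q^{\nu},N,c,q^{j}})$ and showing it equals $(\text{const depending on }c,j)\times\chi_c'(q)^{\nu-2j}$ on the set of $\nu$ where the term is nonzero, together with pinning down that constant. This requires carefully unwinding the CRT definition of $y$, splitting $\chi$ via $\chi=\chi_c\times\chi_c'$, checking that the $\chi_c$-part is $\nu$-independent (it only sees $d=q^{j}\bmod c_1$), and verifying that the condition $\gcd(c,N/c)\mid\gcd(N/f_\chi,q^{\nu-j}-q^{j})$ is equivalent to $q^{\nu-2j}\equiv 1\pmod l$, so that the nonvanishing $\nu$ form exactly the progression $\nu\equiv 2j\pmod{m_{q,l}}$. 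Everything else — the geometric summations, the substitution $X=\chi(q)^{1/2}q^{(k-1)/2}\zeta$, the convergence estimate — is routine bookkeeping.
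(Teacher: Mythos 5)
Your plan is correct and follows essentially the same route as the paper: expand $A_3(q^\nu)$ over the divisors $d=q^j$, evaluate $\chi(y)$ by CRT through the factorization $\chi=\chi_c\times\chi_c'$ (giving $\chi(q)^j\chi_c'(q)^{\nu-2j}$), translate the divisibility condition into $\nu\equiv 2j\pmod{m_{q,l}}$, interchange sums into geometric series, and isolate the half-weighted diagonal $d=\sqrt{m}$ to produce the extra $\tfrac{1}{2}(1-X^2)^{-1}$ term. The only difference is organizational: your parametrization $\nu=2j+m_{q,l}w$ makes the double sum factor into two independent geometric series, whereas the paper first splits by the parity of $\nu$ and then by the parity of $m_{q,l}$ before resumming, so your bookkeeping is slightly cleaner but it is the same argument.
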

\begin{proof}
We start the proof of the following expression of $F(\zeta)$:
{\allowdisplaybreaks\begin{align}
	& -\sum_{\nu=0}^{\infty}\sum_{j=0}^{\nu}q^{(k-1)j}
	\sum_{\substack{c|N \\
			\gcd(c,N/c)|\gcd(N/f_\chi, (q^{2\nu+1-2j}-1)q^j)}}
	\varphi(\gcd(c,N/c))\chi(y)\zeta^{2\nu+1} \label{first} \\ 
	& 
	-\sum_{\nu=0}^{\infty}\sum_{j=0}^{\nu}
	q^{(k-1)j}
	\sum_{\substack{c|N \\
			\gcd(c,N/c)|\gcd(N/f_\chi, (q^{2\nu-2j}-1)q^j)}}
	\varphi(\gcd(c,N/c))\chi(y)\zeta^{2\nu} \label{second} \\
	& +\sum_{\nu=0}^{\infty}\frac{1}{2}q^{(k-1)\nu}
	\sum_{\substack{c|N \\
			\gcd(c,N/c)|\gcd(N/f_\chi, 0)}}
	\varphi(\gcd(c,N/c))\chi(y) \zeta^{2\nu}. \label{third}
\end{align}
}The third term \eqref{third} equals
\begin{align}\frac{1}{2}\frac{1}{1-\chi(q)q^{k-1}\zeta^2}\sum_{\substack{c|N \\ \gcd(c,N/c)|(N/f_\chi)}}
\varphi(\gcd(c,N/c))
\label{third-formula}
\end{align}
for $|X|=|q^{(k-1)/2}\zeta|<1$ by $\chi(y)=\chi(q)^\nu$ and an easy calculation.

We consider the first term \eqref{first}.
Noting $y\equiv q^j (\text{mod } c_1)$ and
$y\equiv q^{2\nu+1-j} (\text{mod } f_\chi/c_1)$, we have
\begin{align*}
	\chi(y)=\chi_c(q^j)\chi_c'(q^{2\nu+1-j})=\chi_c(q)^j\chi_c'(q)^{2\nu+1-j}.
\end{align*}
From this, \eqref{first} is rewritten as
\begin{align*}
	-\sum_{\nu=0}^{\infty}
	\sum_{j=0}^{\nu}q^{(k-1)j}
	\sum_{\substack{c|N \\
			\gcd(c,N/c)|\gcd(N/f_\chi, q^{2\nu+1-2j}-1)}}
	\varphi(\gcd(c,N/c))\chi_c(q)^j\chi_c'(q)^{2\nu+1-j} \zeta^{2\nu+1},
\end{align*}
which is transformed into
\begin{align*}
	-\sum_{\nu=0}^{\infty}\sum_{\substack{l\in \NN \\ l|(N/f_\chi)}} \sum_{\substack{c | N \\
			l=\gcd(c,N/c)}}\sum_{\substack{0\le j \le \nu \\
			2\nu+1-2j\equiv 0 (\text{mod }m_{q,l})}} \varphi(l)
	\{q^{k-1}\chi(q)\chi_c'(q^{-2})\}^j \{\chi_c'(q)\zeta\}^{2\nu+1}.
\end{align*}
By changing the order of summations, this series equals
$$-\sum_{l|(N/f_\chi)}\varphi(l)\sum_{\substack{c | N \\ l=\gcd(c,N/c)}}
\sum_{\nu=0}^\infty \{\chi_c'(q)\zeta\}^{2\nu+1}
\sum_{\substack{0\le j \le \nu \\
		2\nu+1-2j\equiv 0 (\text{mod }m_{q,l})}}\{q^{k-1}\chi(q)\chi_c'(q^{-2})\}^j.
$$
If there exists $w \in \ZZ$ such that $2\nu+1-2j=m_{q,l}w$,
then $m_{q,l}$ and $w$ are odd and satisfy $m_{q,l}w\in\{1,3,5,\ldots,2\nu+1\}$.
Set $n_{q,l}=\frac{m_{q,l}-1}{2} \in \ZZ_{\ge0}$.
When $t\in\NN$ and $m_{q,l}t+n_{q,l}\le \nu <m_{q,l}(t+1)+n_{q,l}$,
$w$ ranges so that $w =2s+1$ with $0\le s \le t$.
Thus, the series as above equals
{\allowdisplaybreaks\begin{align}
	&-\sum_{\substack{l|(N/f_\chi) \\
			m_{q,l}: {\rm odd}}}\varphi(l)\sum_{\substack{c | N \\ l=\gcd(c,N/c)}}
	\sum_{t=0}^\infty
	\sum_{\nu=m_{q,l}t+n_{q,l}}^{m_{q,l}(t+1)+n_{q,l}-1}
	\{\chi_c'(q)\zeta\}^{2\nu+1}
	\sum_{\substack{s =0}}^{t}\{q^{k-1}\chi(q)\chi_c'(q^{-2})\}^{\nu-n_{q,l}-m_{q,l}s} \notag \\
	= &
	-\sum_{\substack{l|(N/f_\chi) \\
			m_{q,l} : {\rm odd}}}\varphi(l)\sum_{\substack{c | N \\ l=\gcd(c,N/c)}}
	\sum_{t=0}^\infty
	\frac{\{q^{k-1}\chi(q)\zeta^2\}^{m_{q,l}t+n_{q,l}}
		(1-\{q^{k-1}\chi(q)\zeta^2\}^{m_{q,l}}) }{1-q^{k-1}\chi(q)\zeta^2} \notag\\
	& \times \frac{1-\{q^{k-1}\chi(q)\chi_c'(q^{-2})\}^{-m_{q,l}(t+1)}}{1-\{q^{k-1}\chi(q)\chi_c'(q^{-2})\}^{-m_{q,l}}} \{q^{k-1}\chi(q)\chi_c'(q^{-2})\}^{-n_{q,l}}\chi_c'(q)\zeta \notag \\
	= &
	-\sum_{\substack{l|(N/f_\chi) \\
			m_{q,l} : {\rm odd}}}\varphi(l)\sum_{\substack{c | N \\ l=\gcd(c,N/c)}}
	\sum_{t=0}^\infty
	(\{q^{k-1}\chi(q)\zeta^2\}^{m_{q,l}t}- \{\chi_c'(q^2)\zeta^2\}^{m_{q,l}t}\{q^{k-1}\chi(q)\chi_c'(q^{-2})\}^{-m_{q,l}})\notag \\
	& \times	\frac{1-\{q^{k-1}\chi(q)\zeta^2\}^{m_{q,l}}}{1-q^{k-1}\chi(q)\zeta^2} \frac{\{\chi_c'(q)\zeta\}^{2n_{q,l}+1}}{1-\{q^{k-1}\chi(q)\chi_c'(q^{-2})\}^{-m_{q,l}}} \notag \\
	= &
	-\sum_{\substack{l|(N/f_\chi) \\
			m_{q,l} : {\rm odd}}}\varphi(l)\sum_{\substack{c | N \\ l=\gcd(c,N/c)}}
\left(\frac{1}{1-\{\chi(q)q^{k-1}\zeta^2\}^{m_{q,l}}}-\frac{\{q^{k-1}\chi(q)\chi_c'(q^{-2})\}^{-m_{q,l}}}{1-\{\chi_c'(q)^2\zeta^2\}^{m_{q,l}}}  \right)\notag \\
	& \times \frac{1-\{q^{k-1}\chi(q)\zeta^2\}^{m_{q,l}}}{1-q^{k-1}\chi(q)\zeta^2} \frac{\{\chi_c'(q)\zeta\}^{m_{q,l}} }{1-\{q^{k-1}\chi(q)\chi_c'(q^{-2})\}^{-m_{q,l}}} \notag \\
	=& -\sum_{\substack{l|(N/f_\chi) \\
			m_{q,l} : {\rm odd}}}\varphi(l)\sum_{\substack{c | N \\ l=\gcd(c,N/c)}}
	 \frac{\{\chi_c'(q)\zeta\}^{m_{q,l}}}{(1-\chi(q)q^{k-1}\zeta^2)(1-\{\chi_c'(q^2)\zeta^2\}^{m_{q,l}})}, \label{first-formula}
\end{align}
}where $X=\chi(q)^{1/2}q^{(k-1)/2}\zeta$. This computation is justified when $|\zeta|<q^{(1-k)/2}$, or equivalently
$|X|<1$.

In a similar fashion, the term
\eqref{second} equals
$$-\sum_{l|(N/f_\chi)}\varphi(l)\sum_{\substack{c| N \\ l=\gcd(c,N/c)}}
\sum_{\nu=0}^\infty \{\chi_c'(q)\zeta\}^{2\nu}
\sum_{\substack{0\le j \le \nu \\
		2\nu-2j\equiv 0 (\text{mod }m_{q,l})}}\{q^{k-1}\chi(q)\chi_c'(q^{-2})\}^j.
$$
When $2\nu-2j=m_{q,l}w$ for $w \in \ZZ$,
then $m_{q,l}w$ is even and $j=\nu-m_{q,l}w/2$.
Suppose that $m_{q,l}$ is even and that $2^{-1}m_{q,l}t\le \nu <2^{-1}m_{q,l}(t+1)$ for $t \in \ZZ_{\ge 0}$.
Then, $w$ varies so that $0\le w \le t$.
The term for even $m_{q,l}$ is given by
{\allowdisplaybreaks \begin{align}
&-\sum_{\substack{l|(N/f_\chi) \\
		m_{q,l} : {\rm even}}}\varphi(l)\sum_{\substack{c|N \\ l=\gcd(c,N/c)}}
\sum_{t=0}^{\infty}\sum_{\nu=2^{-1}m_{q,l}t}^{2^{-1}m_{q,l}(t+1)-1}\{\chi_c'(q)\zeta\}^{2\nu}
\sum_{w=0}^{t}\{q^{k-1}\chi(q)\chi_c'(q^{-2})\}^{\nu-m_{q,l}w/2}\notag \\
= & -\sum_{\substack{l|(N/f_\chi) \\
		m_{q,l} : {\rm even}}}\varphi(l)\sum_{\substack{c| N \\ l=\gcd(c,N/c)}}\frac{1}{(1-q^{k-1}\chi(q)\zeta^2)(1-\{\chi_c'(q)\zeta\}^{m_{q,l}})}. \label{second-m-even}
\end{align}
}When $m_{q,l}$ is odd, $w$ must be $w=2s$ with $0\le s \le t$.
Hence, the term for odd $m_{q,l}$ is given by
{\allowdisplaybreaks \begin{align}
&-\sum_{\substack{l|(N/f_\chi) \\
		m_{q,l} : {\rm odd}}}\varphi(l)\sum_{\substack{c| N \\ l=\gcd(c,N/c)}}
\sum_{t=0}^\infty
\sum_{\nu=m_{q,l}t}^{m_{q,l}(t+1)-1}
\{\chi_c'(q)\zeta\}^{2\nu}
\sum_{\substack{s =0}}^{t}\{q^{k-1}\chi(q)\chi_c'(q^{-2})\}^{\nu-m_{q,l}s}\notag \\
=	& -\sum_{\substack{l|(N/f_\chi) \\
		m_{q,l} : {\rm odd}}}\varphi(l)\sum_{\substack{c| N \\ l=\gcd(c,N/c)}}
\frac{1}{(1-q^{k-1}\chi(q)\zeta^2)(1-\{\chi_c'(q^2)\zeta^2\}^{m_{q,l}})}.\label{second-m-odd}
\end{align}
}By the consideration so far, we obtain the assertion from \eqref{third-formula}, \eqref{first-formula},
\eqref{second-m-even} and \eqref{second-m-odd}.
\end{proof}

%
%

\section{Proof of Theorem \ref{best possible} and Corollary \ref{equidist}}

By virtue of the resolvent trace formula (Theorem \ref{resolventTF}), we have the following estimate.
\begin{cor}\label{esti}
	For any $\e>0$ and $k\ge 4$, we have the bound
	\begin{align*}|A_{k,q}(\nu)|\ll_{q, k,\e} q^{\nu(1/2+\e)}, \quad \nu\in \N.
	\end{align*}
\end{cor}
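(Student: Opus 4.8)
The plan is to extract the bound on $A_{k,q}(\nu)$ directly from the resolvent trace formula \eqref{RTF} by a generating-function argument. Write $R(X) = \tr(\{T'(q)-(X+X^{-1})\}^{-1}|S_k(N,\chi))$ and specialize Theorem~\ref{resolventTF} to the case $N=1$, $\chi=\mathbf 1$, so that the formula simplifies to \eqref{RTF1}: the sum over $i$ is a finite sum over the Hecke eigenbasis of $S_k(\SL_2(\Z))$, the middle two "geometric" terms are rational functions of $X$, and the last term is $\tfrac{X}{2}\sum_{\nu\ge0}A_{k,q}(\nu)(q^{-1/2}X)^\nu$. The point is that $\tfrac{X}{2}\sum_{\nu}A_{k,q}(\nu)(q^{-1/2}X)^\nu$ equals $R(X)$ minus an explicit rational function whose only poles lie at $X = \pm1$, $X=\pm q^{1/2}$, and $X=q^{(k-1)/2}$ (coming respectively from $\tfrac{X}{1-X^2}$, the $A_4$-type term when $k=2$, and the Eisenstein term $\tfrac{X}{(1-X^2)(1-q^{(1-k)/2}X)}$); for $k\ge4$ the term $A_4$ is absent, so the closest singularity is at $|X|=1$.

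The first step is therefore to identify the radius of convergence of the power series $\sum_\nu A_{k,q}(\nu)(q^{-1/2}X)^\nu$ as a function of $X$. On the left, $R(X) = \sum_i (\a_i(q)+\a_i(q)^{-1}-X-X^{-1})^{-1}$ is meromorphic in $X$ with poles exactly at $X = \a_i(q)^{\pm1}$; since $|\a_i(q)|=1$ by Deligne's bound, every such pole lies on $|X|=1$. Combined with the location of the poles of the subtracted rational part, we conclude that $G(X) := \tfrac{X}{2}\sum_\nu A_{k,q}(\nu)(q^{-1/2}X)^\nu$ extends to a meromorphic function on $|X|<q^{1/2}$ (for $k\ge4$; one must also check that $q^{(k-1)/2}>1$, which holds, and that the Eisenstein pole at $X=q^{(1-k)/2}$... wait, rather $(1-q^{(1-k)/2}X)^{-1}$ has its pole at $X=q^{(k-1)/2}$, which is outside $|X|\le q^{1/2}$ once $k\ge4$) all of whose singularities on the closed disk $|X|\le1$ sit on the circle $|X|=1$, and which is holomorphic on $1<|X|<q^{1/2}$. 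Hence the power series in the variable $Y = q^{-1/2}X$, namely $\sum_\nu A_{k,q}(\nu)Y^\nu$ (after dividing by $X/2$, or more cleanly after clearing the factor), has radius of convergence at least $q^{-1/2}\cdot 1 = q^{-1/2}$ — but we want it to be strictly larger, at least $q^{-1/2}\cdot r$ for some $r>1$, equivalently that the nearest singularity in $Y$ is at $|Y|\ge q^{-1/2}$; more precisely the nearest singularity of $G$ as a function of $Y$ lies at $|Y| = q^{-1/2}\cdot 1 = q^{-1/2}$ (from the poles on $|X|=1$), and $G$ continues holomorphically to $|Y|<1$. From $|Y_{\mathrm{sing}}| = q^{-1/2}$ and standard Cauchy-estimate / root-test reasoning, $\limsup_\nu |A_{k,q}(\nu)|^{1/\nu} \le q^{1/2}$, which yields $|A_{k,q}(\nu)| \ll_{q,k,\e} q^{\nu(1/2+\e)}$.

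Concretely I would argue by Cauchy's estimate: for any $\rho$ with $1<\rho<q^{1/2}$, the function $G(X)$ — equivalently the coefficients $A_{k,q}(\nu)$, read off from $\tfrac{X}{2}\sum_\nu A_{k,q}(\nu)q^{-\nu/2}X^\nu = G(X)$ so that $A_{k,q}(\nu) = 2q^{\nu/2}\cdot[X^{\nu+1}]G(X)$ — satisfies $|A_{k,q}(\nu)| \le 2q^{\nu/2}\rho^{-(\nu+1)}\max_{|X|=\rho}|G(X)|$. Since $G$ is holomorphic on the annulus $1<|X|<q^{1/2}$ (for $k\ge4$), $\max_{|X|=\rho}|G(X)| =: C(\rho,k,q)$ is finite, giving $|A_{k,q}(\nu)| \le 2C(\rho,k,q)\rho^{-1}(q^{1/2}/\rho)^\nu$. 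Now given $\e>0$ choose $\rho = q^{1/2}/q^{\e} = q^{1/2-\e}$ (legitimate as long as $q^{1/2-\e}>1$, i.e. for $\e$ small, which is all that matters); then $(q^{1/2}/\rho)^\nu = q^{\e\nu}$ and $|A_{k,q}(\nu)| \ll_{q,k,\e} q^{\e\nu}$ — hold on, that overshoots; the correct bookkeeping is that the radius of convergence of $\sum A_{k,q}(\nu)Y^\nu$ in $Y=q^{-1/2}X$ is the $Y$-distance to the nearest singularity, which is $q^{-1/2}$, so $\limsup|A_{k,q}(\nu)|^{1/\nu}\le q^{1/2}$, hence for every $\e>0$ there is $C$ with $|A_{k,q}(\nu)|\le Cq^{\nu(1/2+\e)}$. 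I would present it in the clean root-test form to avoid the off-by-constant juggling above.

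The main obstacle, and the only place real care is needed, is the \textbf{pole analysis of the rational terms} and the verification that no singularity of $R(X)$ or of the subtracted rational function lies in the punctured disk $0<|X|\le1$ except on the unit circle itself. Two points must be checked: (i) Deligne's bound $|\a_i(q)|=1$, which is quoted in the excerpt, pins the poles of $\sum_i(\a_i(q)+\a_i(q)^{-1}-X-X^{-1})^{-1}$ to $|X|=1$ — but one should note the possibility of cancellation is irrelevant since we only need an upper bound on the location of singularities; (ii) for $k\ge4$ the term $A_4$ vanishes (so the offending pole at $X=\pm q^{1/2}$ and $X=\pm q^{-1/2}$ disappears), and the Eisenstein term contributes a pole only at $X=q^{(k-1)/2}$, which exceeds $q^{1/2}$ precisely when $k\ge4$ (indeed $q^{(k-1)/2}\ge q^{3/2}>q^{1/2}$). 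This is exactly why the hypothesis $k\ge4$ appears in the statement, and why the same argument would fail at $k=2$. Everything else — the passage from "meromorphic continuation past $|X|=1$" to the coefficient bound — is a routine application of Cauchy's inequality on a circle of radius $\rho\in(1,q^{1/2})$.
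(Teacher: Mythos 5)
Your final argument coincides with the paper's proof: by Deligne's bound the resolvent $\sum_i(\a_i(q)+\a_i(q)^{-1}-X-X^{-1})^{-1}$ is holomorphic on $|X|<1$, the rational terms on the right of \eqref{RTF} have no poles in the open unit disc, the $A_4$-term vanishes for $k\ge 4$, so $\sum_\nu A_{k,q}(\nu)(q^{-1/2}X)^\nu$ is holomorphic on $|X|<1$ and the Cauchy estimate (equivalently, the root test applied to a radius of convergence $\ge 1$ in $X$) gives $|A_{k,q}(\nu)|\ll_\e q^{\nu(1/2+\e)}$. Two small corrections to your write-up. First, your intermediate version with a contour $|X|=\rho$, $1<\rho<q^{1/2}$, is not merely ``overshooting'': it is invalid, because Cauchy's formula for the Taylor coefficients at the origin requires the contour to lie in the disc of holomorphy about $0$, and the poles of the resolvent sit on $|X|=1$; integrating on $|X|=\rho>1$ would produce the Laurent coefficients of the annulus, not the $A_{k,q}(\nu)$. (Had the bound $\ll q^{\e\nu}$ been correct it would contradict the optimality assertion in Theorem~\ref{best possible}.) Your retreat to the root-test form is exactly right. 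Second, the corollary is applied later for arbitrary level $N$ and nebentypus $\chi$ (in the proof of Theorem~\ref{weak conv}), so the argument should be run on the full formula \eqref{RTF} rather than its level-one specialization \eqref{RTF1}; the pole analysis is identical, since the extra terms indexed by $c\mid N$ have poles only at $X=\pm1$ and at $|X|=q^{(k-1)/2}$.
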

\begin{proof}
	By the Ramanujan bound, the left-hand side of \eqref{RTF}
	in Theroem \ref{resolventTF} is holomorphic on the unit disc $|X|<1$. The first
	three summands of the right-hand side are evidently holomorphic on $|X|<1$. 
	The fourth summand is vanishing by $k\ge 4$.
	Hence, the power series $\sum_{\nu=0}^{\infty}A_{k,q}(\nu)(\chi(q)^{-1/2}q^{-1/2}\,X)^{\nu}$ (which is convergent for $|X|\ll 1)$ becomes a holomorphic function on the unit disc $|X|<1$.
	The Cauchy estimate yields the desired bound.
\end{proof}

Let us prove Theorem \ref{best possible}.
The estimate $({\rm E}_{n, 1/2})$ for any $n\in\NN$ follows from Corollary \ref{esti} applied to the full modular group $\SL_2(\Z)$ with $\chi={\bf 1}$; we note that Chebyshev polynomials form a basis of the space of polynomials. By \cite[Th\'eor\`eme 1]{Serre}, there exist a large $k \in 2\ZZ_{\ge 2}$ and a normalized Hecke eigenform $f_i$ of weight $k$ such that $\a_{i}(q)\neq 1$.
Then,
$\sum_{\nu=0}^{\infty}A_{k,q}(\nu)q^{-\nu/2}X^\nu$ has a pole at $X=\a_{i}(q)$
by \eqref{RTF}, which is a special case of Theorem \ref{resolventTF}. Hence, its radius of convergence is $1$. Therefore for any $\a\in (0,1/2)$, the estimate $({\rm E}_{n,\a})$ does not hold for some $n \in \NN$.
\qed

\medskip
\noindent

Let us define a Radon measure $\mu_\nu^{N,\chi,q}$ on $[-1,1]$ by
$$\langle \mu_{\nu}^{N,\chi, q}, f\rangle = 2^{-1}q^{-\nu}\sum_{t \in \ZZ, \ t^2<4q^\nu}H_{N,\chi}(4q^\nu-t^2)\delta_{\frac{t}{2q^{\nu/2}}}.$$
Then, $\mu_{\nu}^{N,\chi,q}$ is positive if $\chi$ is principal.
\begin{lem}\label{formula for 1}We have
	$$\langle \mu_{\nu}^{N,\chi,q}, 1\rangle = \delta_{\chi,\bf1}\frac{1}{1-q^{-1}}+\Ocal_{N, \chi}(\nu q^{-\nu/2}).$$
\end{lem}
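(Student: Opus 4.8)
The plan is to extract the value $\langle \mu_\nu^{N,\chi,q},1\rangle$ from the resolvent trace formula in Theorem~\ref{resolventTF} by isolating the coefficient of a suitable power of $X$. Observe that $U_{k-2}(x)=1$ when $k=2$, so that $A_{2,q}(\nu)=\sum_{t^2<4q^\nu}H_{N,\chi}(4q^\nu-t^2)=2q^\nu\langle\mu_\nu^{N,\chi,q},1\rangle$. Hence it suffices to analyze the generating series $\tfrac{X}{2}\sum_{\nu\ge0}A_{2,q}(\nu)(\chi(q)^{-1/2}q^{-1/2}X)^\nu$ appearing on the right-hand side of \eqref{RTF} when $k=2$, and to determine the growth of its coefficients. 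Rearranging \eqref{RTF}, this series equals the left-hand side $\sum_i(\a_i(q)+\a_i(q)^{-1}-(X+X^{-1}))^{-1}$ minus the first four explicit summands (with $k=2$).

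First I would record that, by the Ramanujan bound $|\a_i(q)|=1$, the left-hand side $\sum_i(\a_i(q)+\a_i(q)^{-1}-(X+X^{-1}))^{-1}$ is holomorphic on $|X|<1$ except possibly where $X+X^{-1}=\a_i(q)+\a_i(q)^{-1}$, i.e.\ at $X=\a_i(q)^{\pm1}$ on the unit circle; so it is holomorphic on the open disc $|X|<1$ and its Taylor coefficients are $O_\delta((1-\delta)^{-\nu})$ for every $\delta>0$, hence $O_\e(q^{\nu\e})$ after the substitution scaling — more carefully, the coefficients of $\tfrac{X}{2}\sum A_{2,q}(\nu)(\chi(q)^{-1/2}q^{-1/2}X)^\nu$ are thereby bounded by $q^{-\nu/2}$ times something subexponential, contributing an error of size $\Ocal(q^{-\nu/2+\nu\e})$; to get the sharper $\nu q^{-\nu/2}$ I would instead argue that the left-hand side extends continuously to the closed disc minus finitely many points, or simply absorb this into the stated error by a cruder estimate, since the lemma only claims an error $\Ocal_{N,\chi}(\nu q^{-\nu/2})$. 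Next I would compute the $\nu$-th Taylor coefficient of each of the first four explicit terms of \eqref{RTF} with $k=2$: the term $-\tfrac{k-1}{12}\psi(N)\tfrac{X}{1-q^{-1}X^2}$ contributes coefficients supported on odd indices and decaying like $q^{-\nu}$; the term $-\tfrac12\tfrac{X}{1-X^2}\sum_c\varphi(\gcd(c,N/c))$ contributes a bounded (constant in $\nu$) coefficient on odd indices; the sum over $l\mid(N/f_\chi)$ of $\tfrac{X}{(1-X^2)(1-\{\chi_c'(q)\chi(q)^{-1/2}q^{(1-k)/2}X\}^{m_{q,l}})}$ contributes coefficients that are $O(1)$ in $\nu$; and the term $-\delta_{k,2}\delta_{\chi,\bf1}\tfrac{X}{(1-q^{1/2}X)(1-q^{-1/2}X)}$, present only when $\chi=\bf1$, contributes (after partial fractions) a coefficient of $X^{\nu+1}$ equal to $\tfrac{1}{q^{1/2}-q^{-1/2}}(q^{\nu/2}-q^{-\nu/2})\cdot(\text{sign factors})$, whose leading part is $\tfrac{q^{\nu/2}}{q^{1/2}-q^{-1/2}}=\tfrac{q^{(\nu+1)/2}}{q-1}$.

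Then I would match powers: the coefficient of $X^{\nu+1}$ in $\tfrac{X}{2}\sum_\mu A_{2,q}(\mu)(\chi(q)^{-1/2}q^{-1/2}X)^\mu$ is $\tfrac12 A_{2,q}(\nu)\chi(q)^{-\nu/2}q^{-\nu/2}$, so equating gives $A_{2,q}(\nu)=\chi(q)^{\nu/2}q^{\nu/2}\bigl(2\times(\text{coefficient of }X^{\nu+1}\text{ on the RHS side})\bigr)$. When $\chi=\bf1$ the dominant contribution comes from the $A_4$-term and yields $A_{2,q}(\nu)=q^{\nu/2}\cdot\tfrac{2}{q-1}q^{(\nu+1)/2}(1+o(1))=\tfrac{2q^{\nu+1}}{q-1}(1+\ldots)=2q^\nu\tfrac{1}{1-q^{-1}}+\Ocal_{N,\bf1}(q^{\nu/2}\cdot\nu)$, while all other terms on the RHS contribute $O(q^{\nu/2})$ after multiplication by $q^{\nu/2}$; dividing by $2q^\nu$ gives $\langle\mu_\nu^{N,\bf1,q},1\rangle=\tfrac{1}{1-q^{-1}}+\Ocal_N(\nu q^{-\nu/2})$. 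When $\chi\neq\bf1$ the $A_4$-term is absent, the first four summands contribute only $O(q^{\nu})$-free, i.e.\ $O(q^{\nu/2})$-after-scaling terms, and the left-hand side contributes $O(\nu q^{\nu/2})$ at worst, giving $\langle\mu_\nu^{N,\chi,q},1\rangle=\Ocal_{N,\chi}(\nu q^{-\nu/2})$, as claimed.

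The main obstacle is bookkeeping the Taylor coefficients of the holomorphic left-hand side of \eqref{RTF} on $|X|<1$ near the boundary: naively the Cauchy estimate only gives $o(q^{\nu\e})$-type bounds rather than the genuinely decaying $\Ocal(\nu q^{-\nu/2})$ the lemma asks for, so I need to check that after the extra factor $(\chi(q)^{-1/2}q^{-1/2}X)^\nu$ — which effectively rescales the disc of convergence by $q^{1/2}$ — the relevant coefficient is indeed of size $\Ocal_{N,\chi}(\nu q^{-\nu/2})$; equivalently, that $\sum_i(\a_i(q)+\a_i(q)^{-1}-(X+X^{-1}))^{-1}$ has all Taylor coefficients of polynomial (in fact $O(\nu)$, coming from the possible double zero of the denominator when $\a_i(q)=\pm1$) growth, which follows from the fact that it is a finite sum of rational functions whose only poles on $|X|=1$ are at $X=\a_i(q)^{\pm1}$ of order at most $2$.
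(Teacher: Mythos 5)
Your proposal is correct, but it takes a genuinely different route from the paper. The paper's proof does not touch the resolvent formula at all: it applies the Eichler--Selberg trace formula (Theorem~\ref{ES trace}) with $k=2$ directly to $T(q^\nu)$, writes $\tfrac12\sum_{t^2<4q^\nu}H_{N,\chi}(4q^\nu-t^2)=A_1(q^\nu)+A_3(q^\nu)+A_4(q^\nu)-\tr(T(q^\nu)|S_2(N,\chi))$, evaluates $A_4(q^\nu)=\delta_{\chi,{\bf 1}}\tfrac{q^{\nu+1}-1}{q-1}$ exactly, and bounds the remaining terms by $\Ocal_{N,\chi}(\nu q^{\nu/2})$ using the Deligne bound $|a_i(q^\nu)|\le (\nu+1)q^{\nu/2}$. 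You instead take the $k=2$ case of Theorem~\ref{resolventTF} and extract the coefficient of $X^{\nu+1}$. Since the resolvent formula is itself assembled in \S 3 as the generating series of the Eichler--Selberg formulas, your coefficient extraction reproduces the paper's four estimates in disguise: the pole at $X=q^{-1/2}$ of the $\delta_{k,2}\delta_{\chi,{\bf 1}}$-term is exactly the source of the main term $\tfrac{1}{1-q^{-1}}$, the $O(1)$ coefficients of the second and third summands correspond to the bound on $A_3(q^\nu)$, and your key point --- that the spectral side is a finite sum of rational functions whose poles lie on $|X|=1$ and have order at most $2$, hence has Taylor coefficients $O(\nu)$ --- is precisely the bound $|U_\nu(\cos\theta)|\le \nu+1$ from Lemma~\ref{L1}, i.e.\ the Deligne bound. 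You correctly flag that a naive Cauchy estimate on $|X|<1$ is insufficient and resolve it the right way. Your version is somewhat more roundabout (the generating series is built from the trace formulas and then unpacked again), but it does make transparent why the main term exists only for $k=2$, $\chi={\bf 1}$. Two intermediate arithmetic slips should be fixed, though they cancel and do not affect the conclusion: the coefficient of $X^{\nu+1}$ in $\tfrac{X}{(1-q^{1/2}X)(1-q^{-1/2}X)}$ is $\tfrac{q^{(\nu+1)/2}-q^{-(\nu+1)/2}}{q^{1/2}-q^{-1/2}}$, whose leading part after multiplication by $2q^{\nu/2}$ is $\tfrac{2q^{\nu}}{1-q^{-1}}$ (your displayed product $q^{\nu/2}\cdot\tfrac{2}{q-1}q^{(\nu+1)/2}$ equals $\tfrac{2q^{\nu+1/2}}{q-1}$, not $\tfrac{2q^{\nu+1}}{q-1}$).
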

\begin{proof}By the Eichler-Selberg trace formula (Theorem \ref{ES trace}) for $k=2$, we have the equality
	$$\frac{1}{2}\sum_{t \in \ZZ, \ t^2<4q^\nu}H_{N,\chi}(4q^\nu-t^2) = A_1(q^\nu)+A_3(q^\nu)+A_4(q^\nu)-\tr(T(q^\nu)|S_2(N,\chi)).$$
	By easy calculation, we have two equalities $A_4(q^\nu)=\delta_{\chi,\bf1}\frac{q^{\nu+1}-1}{q-1}$ and
	$A_1(q^\nu) = \tilde{\chi}(\sqrt{q^\nu})\frac{\psi(N)}{12}$.
	The term $A_3(q^\nu)$ is estimated as
	$$|A_3(q^\nu)|\ll \sum_{j=0}^{\nu}\min(q^\nu, q^{\nu-j})\sum_{c|N}\varphi(\gcd(c,N/c))\ll \nu q^{\nu/2}d(N)\varphi(N).$$
	By the Ramanujan bound $|a_{f_i}(n)|\le d(n)n^{1/2}$, we have
	$$|\tr(T(q^\nu)|S_2(N,\chi))| \le \sum_{i=0}^{\dim S_2(N,\chi)} d(q^\nu)q^{\nu/2} = (\nu+1)q^{\nu/2} \dim S_{2}(N,\chi).$$
	By combining the evaluations as above, the value $\langle \mu_{\nu}^{N,\chi, q}, 1\rangle$, which is equal to $q^{-\nu}\{A_1(q^\nu)+A_3(q^\nu)+A_4(q^\nu)-\tr(T(q^\nu)|S_2(N,\chi))\}$ is estimated by $q^{-\nu}A_4(q^\nu)+q^{-\nu}\Ocal_{N,\chi}(\nu q^{\nu/2})$. Thus we are done.
\end{proof}

\begin{thm} \label{weak conv}
	As $\nu\rightarrow \infty$, the measure $\mu_{\nu}^{N,\chi , q}$ converges $*$-weakly to the measure
	$$
	\delta_{\chi,\bf1}\frac{1}{1-q^{-1}}\,\frac{2}{\pi}\sqrt{1-x^2}\d x.
	$$
\end{thm}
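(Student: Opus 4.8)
The plan is to prove $*$-weak convergence by testing against polynomials, which suffices since polynomials are dense in $C([-1,1])$ and the total masses are bounded (by Lemma~\ref{formula for 1}). Because the Chebyshev polynomials $U_{k-2}$ of the second kind form a basis of the polynomial ring, it is enough to compute $\lim_{\nu\to\infty}\langle\mu_\nu^{N,\chi,q},U_{k-2}\rangle$ for every $k\ge 2$ and match it against $\langle \delta_{\chi,\mathbf1}\frac{1}{1-q^{-1}}\frac{2}{\pi}\sqrt{1-x^2}\,\mathrm dx,\,U_{k-2}\rangle$. A direct integral computation using $U_{k-2}(\cos\theta)=\sin((k-1)\theta)/\sin\theta$ and the substitution $x=\cos\theta$ shows $\frac{2}{\pi}\int_{-1}^1 U_{k-2}(x)\sqrt{1-x^2}\,\mathrm dx=\delta_{k,2}$, so the target value is $\delta_{\chi,\mathbf1}\frac{1}{1-q^{-1}}$ when $k=2$ and $0$ when $k\ge 4$. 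For $k=2$ this is exactly Lemma~\ref{formula for 1}, since $\langle\mu_\nu^{N,\chi,q},U_0\rangle=\langle\mu_\nu^{N,\chi,q},1\rangle$.

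The remaining case is $k\ge 4$, where I must show $\langle\mu_\nu^{N,\chi,q},U_{k-2}\rangle\to 0$. By definition $\langle\mu_\nu^{N,\chi,q},U_{k-2}\rangle=2^{-1}q^{-\nu}A_{k,q}(\nu)$, so this is immediate from Corollary~\ref{esti}: $|A_{k,q}(\nu)|\ll_{q,k,\e}q^{\nu(1/2+\e)}$ gives $|\langle\mu_\nu^{N,\chi,q},U_{k-2}\rangle|\ll q^{-\nu/2+\nu\e}\to 0$ upon choosing $\e<1/2$. Thus for every even $k\ge 2$,
\begin{align*}
\lim_{\nu\to\infty}\langle\mu_\nu^{N,\chi,q},U_{k-2}\rangle=\delta_{k,2}\,\delta_{\chi,\mathbf1}\frac{1}{1-q^{-1}}=\Big\langle \delta_{\chi,\mathbf1}\frac{1}{1-q^{-1}}\frac{2}{\pi}\sqrt{1-x^2}\,\mathrm dx,\,U_{k-2}\Big\rangle.
\end{align*}

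To pass from convergence on this basis to $*$-weak convergence, I would argue as follows. Fix $P\in C([-1,1])$ and $\eta>0$; by Weierstrass choose a polynomial $p$ with $\|P-p\|_\infty<\eta$. Write $p$ as a finite linear combination of the $U_j$. Since the total masses $\langle\mu_\nu^{N,\chi,q},1\rangle$ are bounded uniformly in $\nu$ (Lemma~\ref{formula for 1}) and the limit measure is finite, the triangle inequality together with the just-established convergence $\langle\mu_\nu^{N,\chi,q},p\rangle\to\langle\mu_\infty,p\rangle$ (where $\mu_\infty=\delta_{\chi,\mathbf1}\frac{1}{1-q^{-1}}\frac{2}{\pi}\sqrt{1-x^2}\,\mathrm dx$) yields $\limsup_\nu|\langle\mu_\nu^{N,\chi,q},P\rangle-\langle\mu_\infty,P\rangle|\le \eta(\sup_\nu\langle\mu_\nu^{N,\chi,q},1\rangle+\langle\mu_\infty,1\rangle)$; letting $\eta\to 0$ finishes the proof. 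There is no real obstacle here: the substantive input is Corollary~\ref{esti} (hence the resolvent trace formula), and everything else is the standard method-of-moments packaging on a compact interval; the only point requiring a word of care is the uniform boundedness of total masses, which is supplied by Lemma~\ref{formula for 1}.
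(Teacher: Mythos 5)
Your overall strategy is the same as the paper's: test against the Chebyshev basis, get the $k\ge 4$ moments from Corollary~\ref{esti}, get the zeroth moment from Lemma~\ref{formula for 1}, and upgrade to $*$-weak convergence by polynomial density plus a uniform bound on the measures. (The paper outsources the last step to \cite[Proposition 2]{Serre} in the principal case; you spell it out, which is fine.)

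There is, however, a genuine gap at exactly the point you wave off as ``only requiring a word of care.'' When $\chi$ is non-principal, $\mu_\nu^{N,\chi,q}$ is a signed (indeed complex-valued) measure, because $H_{N,\chi}$ involves the character values $\chi(x)$. For such a measure the estimate $|\langle\mu_\nu, P-p\rangle|\le \|P-p\|_\infty\,\langle\mu_\nu,1\rangle$ is false; you need the total variation norm, not the total mass. Worse, Lemma~\ref{formula for 1} in the non-principal case says $\langle\mu_\nu^{N,\chi,q},1\rangle = O(\nu q^{-\nu/2})\to 0$, so the quantity you propose to use as the uniform bound tends to zero while the total variation does not --- the bound $\eta(\sup_\nu\langle\mu_\nu,1\rangle+\langle\mu_\infty,1\rangle)$ therefore does not control $|\langle\mu_\nu,P-p\rangle|$ at all. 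The repair is the one the paper makes explicit: since $|\mu(t,f,m)|$ is at most its value for the trivial character, one has $|H_{N,\chi}(D)|\le H_{N,\mathbf 1}(D)$ pointwise, hence
\begin{align*}
|\langle \mu_{\nu}^{N,\chi,q}, f\rangle| \le \langle\mu_{\nu}^{N,\mathbf 1,q},1\rangle\,\sup_{x\in[-1,1]}|f(x)|,
\end{align*}
and the right-hand side is uniformly bounded by Lemma~\ref{formula for 1} applied to the \emph{principal} character. With that substitution (and noting that for non-principal $\chi$ all polynomial moments tend to $0$, consistent with the limit measure being $0$), your density argument goes through. For principal $\chi$ the measure is positive and your argument is already correct as written.
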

\begin{proof}Let us consider the case where $\chi$ is principal.
	By the positivity of $\mu_\nu^{N,{\bf1},q}$ and \cite[Proposition 2]{Serre},
	it suffices to show the following convergence:
	\begin{align}
	&\lim_{\nu\rightarrow \infty} \langle \mu_{\nu}^{N, {\bf1}, q},U_{n}\rangle=0, \quad (n>0) \label{mu-U}, \\
	&\lim_{\nu\rightarrow \infty} \langle \mu_{\nu}^{N,{\bf1}, q},1\rangle=\frac{1}{1-q^{-1}}.
	\label{mu-1}
	\end{align}
	Corollary~\ref{esti} gives us the bound $|\langle \mu_{\nu}^{N,{\bf1}, q},U_{k-2}\rangle|\ll_{\e}q^{(-1/2+\e)\nu}$ which is sufficient to have \eqref{mu-U}. Furthermore, Lemma~\ref{formula for 1} implies \eqref{mu-1}. This completes the proof of the assertion when $\chi$ is principal.

In the non-principal case of $\chi$,
although $\mu_\nu^{N, \chi, q}$ is not always positive,
we obtain the assertion in the same way as the principal case
from the inequality
$$\langle \mu_{\nu}^{N,\chi,q}, f\rangle \le \mu_{\nu}^{N,{\bf1},q}(1)\sup_{x\in[-1,1]}|f(x)|, \quad f \in C([-1,1])$$
and the vanishing of $\lim_{\nu\rightarrow \infty}\mu_\nu^{N,\chi,q}$ on the polynomial functions (Lemma \ref{formula for 1}).
\end{proof}

\medskip
\noindent
{\bf Remark} : To have the convergence of the measure, any improvement of the Hecke bound of the Fourier coefficients is sufficient (see the proof of Lemma \ref{formula for 1}).

\medskip
By the equalities $\mu_{q^\nu}=2q^{\nu}\,\mu_{\nu}^{1,{\bf1},q}$ and $$\mu_{q^\nu}([-1,1])=2\s(q^\nu)-\sum_{j=0}^{\nu}\min(q^j,q^{\nu-j})\sim 2q^\nu(1-q^{-1})^{-1}, \qquad (\nu \rightarrow \infty),$$
Theorem \ref{equidist} follows from Theorem \ref{weak conv} and \cite[Proposition 1]{Serre}.
\qed

With a bit more work, we have a generalization of Theorem \ref{equidist} as follows. Let $S=\{q_1,\dots,q_r\}$ be a finite set of prime numbers and $\N(S)$ the set of those positive integers whose prime divisors belong to $S$. Then the limit formula in Theorem~\ref{equidist} is true when $m$ inside $\N(S)$ grows to infinity, i.e., for any $[\a,\b]\subset [-1,1]$,  
\begin{align*}
\lim_{\substack{\nu_q \rightarrow \infty (\forall q \in S) \\ m=\prod_{q \in S}q^{\nu_q}\in \N(S)}} \frac{\mu_{m}([\a,\b])}{\mu_{m}([-1,-1])}=\frac{2}{\pi}\int_{\a}^{\b}\sqrt{1-x^2}\d x. 
\end{align*}
This is shown by the same argument as above by establishing the estimate 
\begin{align}
|\langle \mu_{m},U_{l}\rangle|\ll_{l,\e} m^{1/2+\e}, \quad m\in \N(S)
 \label{Remark1}
\end{align}
for each $l\in 2\N$. This bound in turn is shown by the multivariable analogue of \eqref{RTF1}
for $k\ge 4$, which equates the rational function 
\begin{align*}
\sum_{i=1}^{\dim S_k(\SL_2(\ZZ))}\prod_{j=1}^{r} \frac{-q_j^{(1-k)/2}}{q_j^{(1-k)/2}a_i(q_j)-(X_j+X_j^{-1})}
\end{align*}
with a sum of the following three power series: 
\begin{align}
&\tfrac{k-1}{12}
\prod_{j=1}^{r}\frac{q_j^{\frac{1-k}{2}}X_j}{1-q_j^{-1}X_j^2}, 
 \notag
\\
& \sum_{(\nu_1,\dots,\nu_r) \in \Z_{\ge 0}^{S}} A_3(q_1^{\nu_1}\cdots q_r^{\nu_r})
\prod_{j=1}^{r}(q_j^{(1-k)/2}X_j)^{\nu_j+1}, 
 \label{Hypterm} \\
& -\tfrac{1}{2}\sum_{(\nu_1,\dots,\nu_r) \in \Z_{\ge 0}^{S}} 
\langle \mu_{q_1^{\nu_1}\cdots q_r^{\nu_r}},U_{k-2}\rangle \prod_{j=1}^{r}\{(q_j^{-1/2}X_j)^{\nu_j}
\,q_j^{(1-k)/2}X_j\}.
\label{Ellterm}
\end{align}
It is not clear that the series \eqref{Hypterm} has a rational expression; but, it is easy to see its absolute convergence on the polydisc $\Delta=\{(X_1,\dots,X_r)\in \C^r|\,\max_{1\leq j \leq r}|X_j|<1\}$.
Hence invoking the Ramanujan bound, the holomorphic function defined in a neighborhood of the origin by the convergent multi-series \eqref{Ellterm} has a holomorphic continuation to the polydisc $\Delta$. Then the Cauchy estimate yields the bound \eqref{Remark1}. Details are left to the readers.

%
%


\section*{Aknowledgements}

The second author was supported by Grant-in-Aid for Scientific research (C) 15K04795.

\medskip
\noindent
{Shingo SUGIYAMA\\
	Department of Mathematics, College of Science and Technology, Nihon University,
	Suruga-Dai, Kanda, Chiyoda, Tokyo 101-8308, Japan} \\
{\it E-mail} : {\tt s-sugiyama@math.cst.nihon-u.ac.jp}

\medskip
\noindent
{Masao TSUZUKI \\ Department of Science and Technology, Sophia University, Kioi-cho 7-1 Chiyoda-ku Tokyo, 102-8554, Japan} \\
{\it E-mail} : {\tt m-tsuduk@sophia.ac.jp}

\end{document}